\documentclass[pdftex,a4paper,12pt]{scrartcl}

\usepackage[T1]{fontenc}
\usepackage{lmodern} 
\usepackage{exscale}

\usepackage[pdftex]{graphicx}
\usepackage{amsmath,amssymb,amsthm}
\usepackage{mathtools}
\usepackage{hyperref}
\usepackage[capitalize]{cleveref}
\usepackage{enumitem}

\usepackage{macros}
\usepackage[tikzset={x=.6cm, y=.6cm}]{linkdiags} 

\allowdisplaybreaks[2]

\ifdefined\cprime\else
\def\cprime{'}
\fi

\defTikzBox[baseline=-.5ex,scale=.7]{arrayHCrossNeg}{%
  \draw[red,dashed,very thick] (-1,1) arc (90:270:1);
  \draw[red,dashed,very thick] (1,-1) arc (-90:90:1);
  \draw[red,very thick] (-1,-1) .. controls +(.5,0) and +(-.5,0) .. (1,1);
  \draw[white,line width=5] (-1,1) .. controls +(.5,0) and +(-.5,0) .. (1,-1);
  \draw[red,very thick] (-1,1) .. controls +(.5,0) and +(-.5,0) .. (1,-1); }

\defTikzBox[baseline=-.5ex,scale=.7]{arrayHCrossPos}{%
  \draw[red,dashed,very thick] (-1,1) arc (90:270:1);
  \draw[red,dashed,very thick] (1,-1) arc (-90:90:1);
  \draw[red,very thick] (-1,1) .. controls +(.5,0) and +(-.5,0) .. (1,-1);
  \draw[white,line width=5] (-1,-1) .. controls +(.5,0) and +(-.5,0) .. (1,1); 
  \draw[red,very thick] (-1,-1) .. controls +(.5,0) and +(-.5,0) .. (1,1); }

\defTikzBox[baseline=-.5ex,scale=.7]{arrayVCrossNeg}{%
  \draw[red,dashed,very thick] (1,1) arc (0:180:1);
  \draw[red,dashed,very thick] (-1,-1) arc (180:360:1);
  \draw[red,very thick] (-1,1) .. controls +(0,-.5) and +(0,.5) .. (1,-1);
  \draw[white,line width=5] (1,1) .. controls +(0,-.5) and +(0,.5) .. (-1,-1);
  \draw[red,very thick] (1,1) .. controls +(0,-.5) and +(0,.5) .. (-1,-1); }

\defTikzBox[baseline=-.5ex,scale=.7]{arrayVCrossPos}{%
  \draw[red,dashed,very thick] (1,1) arc (0:180:1);
  \draw[red,dashed,very thick] (-1,-1) arc (180:360:1);
  \draw[red,very thick] (1,1) .. controls +(0,-.5) and +(0,.5) .. (-1,-1);
  \draw[white,line width=5] (-1,1) .. controls +(0,-.5) and +(0,.5) .. (1,-1);
  \draw[red,very thick] (-1,1) .. controls +(0,-.5) and +(0,.5) .. (1,-1); }

\theoremstyle{plain}
\newtheorem{mainthm}{Main Theorem}
\newtheorem*{maincor}{Corollary}
\newtheorem{theorem}{Theorem}[section]
\newtheorem{proposition}[theorem]{Proposition}
\newtheorem{corollary}[theorem]{Corollary}
\newtheorem{lemma}[theorem]{Lemma}

\theoremstyle{definition}
\newtheorem{definition}[theorem]{Definition}

\theoremstyle{remark}
\newtheorem{example}[theorem]{Example}
\newtheorem{remark}[theorem]{Remark}
\newtheorem{notation}[theorem]{Notation}

\numberwithin{equation}{section}
\numberwithin{figure}{section}

\title{Crossing change on Khovanov homology and a categorified Vassiliev skein relation}
\date{\today}
\author{Noboru Ito and Jun Yoshida}


\begin{document}
\maketitle

\tableofcontents

\section{Overview}
\emph{Crossing change} is the basic notion in knot theory.
Historically, the unknotting number is one of the classical measures of the complexity of knots, and 
skein relations define  
 knot polynomials: e.g. the Jones, HOMFLY-PT, Alexander-Conway polynomials.
In this paper, we categorify the notion of \emph{crossing change} on Khovanov homology.

In 1990, Vassiliev \cite{Vassiliev1990} studied the stratified space of knots.  The computation of  the associated spectoral sequence gives rise to a sort of classes of knot invariants, which are called Vassiliev invariants.   Vassiliev invariants are extended to link or braid invariants.  
Many works have been carried out on Vassiliev invariants; in particular, it is known that Vassiliev  invariants are at least as powerful as all of the quantum group invariants of knots (Birman \cite{Birman1993}) and that they classify pure braids (Kohno \cite{Kohno1993}).
In Vassiliev theory, crossing change has a crucial role in defining the \emph{order} that yields a filtration on the space of \emph{Vassiliev invariants} (aka. \emph{finite type invariants}).  Nowadays Vassiliev invariants are defined as follows.  Let us denote by $\mathbfcal X_r$ the set of ambient isotopy classes of singular links with exactly $r$ double points.
For an abelian group $A$, and for an $A$-valued link invariant $v:\mathbfcal X_0\to A$, the \emph{$r$-th derivative} $v^{(r)}$ of $v$ is a map $\mathbfcal X_r\to A$ defined inductively by $v^{(0)}=v$ and
\begin{equation}
\label{eq:V-skein}
v^{(r+1)}\left(\diagCrossSingUp{}\right)
=
v^{(r)}\left(\diagCrossPosUp{}\right)
-
v^{(r)}\left(\diagCrossNegUp{}\right)
\quad.
\end{equation}
The equation \eqref{eq:V-skein} is called \emph{Vassiliev skein relation} and 
the map $v^{(r)}$ is called the \emph{$r$-th derivative} of $v$.
If $v^{(r)}=0$ while $v^{(r-1)}\neq 0$, we call $v$ a Vassiliev invariant of order $r$.
The formula \eqref{eq:V-skein} is natural from the following point of view: singular knots are objects that keep track of crossing changes in a passage from one knot to a neighbor in a stratified space $\bigcup_j\left(\mathcal{M}_j-\Sigma_j\right)$, where $\mathcal{M}_j$ (resp. $\Sigma_j$) is the space of immersions $S^1\to S^3$ with exactly $j$ transverse double points (resp. with at least $j$ double points and other singularities).

Vassiliev invariants and the Jones polynomial are related via Taylor expansion.
More precisely, let $\widetilde V$ be the unnormalized Jones polynomial, with variable $q$, and
\[
U_x (L) = \sum_{n=0}^{\infty} v_n (L) x^n,    
\]
the Taylor expansion of $\left.\widetilde V(L)\right|_{q=e^x}$.
Then, each coefficient $v_n (L)$ is a Vassiliev invariant of order $n$ (cf.~Birman-Lin \cite{BirmanLin1993}).

In 2000, Khovanov \cite{Khovanov2000} obtained a categorification of the Jones  polynomial; namely, he constructed a bi-graded abelian group $\mathit{Kh}^{\ast,\star}(D;M)$ for each diagram $D$ and an abelian group $M$ such that
\begin{enumerate}[label=\upshape(\arabic*)]
  \item $\mathit{Kh}^{\ast,\star}(D;M)$ is invariant under Reidemeister moves so that it defines an invariant for links;
  \item if $k$ is a field, the graded Euler characteristic
\[
\left[\mathit{Kh}(L;k)\right]_q = \sum_{i, j} (-1)^i q^j \dim_{k}\mathit{Kh}^{i, j}(L;k)
\]
equals the unnormalized Jones polynomial $\widetilde V(L)$.
\end{enumerate}
The construction is deeply involved with $2$-dimensional topological quantum field theory.
In fact, Bar-Natan \cite{BarNatan2005} generalized the construction of Khovanov in terms of a category of cobordisms.

Our approach to crossing change on Khovanov homology is as follows: we consider a cobordism of the form
\[
\cobHoleVV{-.5ex}{.4}
\;:\;
\diagSmoothUp
\to \diagSmoothUp
\quad.
\]
It will turn out that applying it around a negative crossing, we get a chain map
\begin{equation}
\label{eq:intro:genu1}
\widehat\Phi:\;
C\left(\diagCrossNegUp{}\right)\otimes \mathbb F_2
\to
C\left(\diagCrossPosUp{}\right)\otimes \mathbb F_2
\quad,
\end{equation}
which we call the \emph{genus-$1$ map}.
With regard to Vassiliev skein relation, we define a complex $C(D;\mathbb F_2)$ for singular link diagram $D$ by taking mapping cones of $\widehat\Phi$ recursively.
Set $\mathit{Kh}(D;\mathbb F_2)$ to be the homology of $C(D;\mathbb F_2)$ (\cref{def:Kh-sing}), then our first main result is described as follows.

\begin{mainthm}
\label{main:KHinv}
Regarding Khovanov homology, we have the following isomorphisms:
\begin{gather}
\mathit{Kh}^{i,j}\left(\diagCrossSingRivOL;\mathbb{F}_2\right)
\cong \mathit{Kh}^{i,j}\left(\diagCrossSingRivOR;\mathbb{F}_2 \right)
\quad,\\
\mathit{Kh}^{i,j}\left(\diagCrossSingRivUL; \mathbb{F}_2 \right)
\cong \mathit{Kh}^{i,j} \left(\diagCrossSingRivUR; \mathbb{F}_2 \right)
\quad,\\
\mathit{Kh}^{i,j} \left(\diagRvSingL; \mathbb{F}_2 \right)
\cong \mathit{Kh}^{i,j} \left(\diagRvSingR; \mathbb{F}_2 \right)
\quad.
\end{gather}
Consequently, $Kh(\blank;\mathbb{F}_2)$ is an invariant of singular links.
\end{mainthm}
Theorem~\ref{main:catVas} gives a ``categorified Vassiliev skein relation".  
\begin{mainthm}\label{main:catVas}
Let $L$ be a singular link with a double point $b$, and put $L_+$ and $L_-$ to be the (possibly singular) links obtained from $L$ by resolving $b$ to positive and negative crossings respectively.
Then, there is a long exact sequence
\[
\cdots
\to\mathit{Kh}^{i-1,j}(L;\mathbb F_2)
\to \mathit{Kh}^{i,j}(L_-;\mathbb F_2)
\xrightarrow{\widehat\Phi} \mathit{Kh}^{i,j}(L_+;\mathbb F_2)
\to\mathit{Kh}^{i,j}(L;\mathbb F_2)
\to\cdots
\]
for each $j\in\mathbb Z$, where $\widehat\Phi$ is the genus-$1$ map applying around $b$.
\end{mainthm}

\begin{maincor}\label{cor:KHsing-Jones}
For a singular link $L$ with exactly $r$ double points, we have
\[
\left[\mathit{Kh}(L;\mathbb F_2)\right]_q
= \widetilde V^{(r)}(L)
\]
here $\widetilde V^{(r)}$ is the $r$-th derivative of the unnormalized Jones polynomial $\widetilde V:\mathbfcal X_r\to\mathbb Z[q,q^{-1}]$.
\end{maincor}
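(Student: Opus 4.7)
The plan is induction on $r$, the number of double points of $L$. The base case $r=0$ is immediate: for a genuine link, $C(L;\mathbb F_2)$ coincides with the ordinary Khovanov complex, so property~(2) quoted in the introduction gives $[\mathit{Kh}(L;\mathbb F_2)]_q = \widetilde V(L) = \widetilde V^{(0)}(L)$.

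For the inductive step, fix any double point $b$ of $L$ and form the singular links $L_+$ and $L_-$ obtained by resolving $b$ positively and negatively; each has $r-1$ double points. \Cref{main:catVas} then provides, for every $j\in\mathbb Z$, the long exact sequence
\[
\cdots \to \mathit{Kh}^{i-1,j}(L;\mathbb F_2) \to \mathit{Kh}^{i,j}(L_-;\mathbb F_2) \xrightarrow{\widehat\Phi} \mathit{Kh}^{i,j}(L_+;\mathbb F_2) \to \mathit{Kh}^{i,j}(L;\mathbb F_2) \to \cdots
\]
of $\mathbb F_2$-vector spaces. The standard alternating-sum identity for a $3$-periodic long exact sequence of finite-dimensional vector spaces then yields
\[
\sum_i (-1)^i \dim \mathit{Kh}^{i,j}(L;\mathbb F_2) \;=\; \sum_i (-1)^i \dim \mathit{Kh}^{i,j}(L_+;\mathbb F_2) \;-\; \sum_i (-1)^i \dim \mathit{Kh}^{i,j}(L_-;\mathbb F_2)
\]
for each $j\in\mathbb Z$; multiplying by $q^j$ and summing over $j$ gives
\[
[\mathit{Kh}(L;\mathbb F_2)]_q = [\mathit{Kh}(L_+;\mathbb F_2)]_q - [\mathit{Kh}(L_-;\mathbb F_2)]_q.
\]
The inductive hypothesis rewrites the right-hand side as $\widetilde V^{(r-1)}(L_+) - \widetilde V^{(r-1)}(L_-)$, which equals $\widetilde V^{(r)}(L)$ by the Vassiliev skein relation~\eqref{eq:V-skein}.

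Before invoking the alternating sum, I should confirm that each $\mathit{Kh}^{i,j}(L;\mathbb F_2)$ is finite-dimensional and that the bi-degrees with nonzero homology are confined to a bounded region, so that $[\mathit{Kh}(L;\mathbb F_2)]_q$ is a well-defined Laurent polynomial. This is routine: $C(L;\mathbb F_2)$ is built by iterated mapping cones of $\widehat\Phi$ applied to the $2^r$ Khovanov complexes of the honest-crossing resolutions of $L$, and mapping cones of bounded, degree-wise finite-dimensional complexes preserve both properties. Consequently there is no real obstacle in the proof of this corollary: everything is formal once \Cref{main:catVas} is in hand, and the one point worth checking is that $\widehat\Phi$ genuinely preserves the $(i,j)$-bigrading, so that the long exact sequence splits over $j$ as displayed.
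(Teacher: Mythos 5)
Your proof is correct and matches the paper's argument almost exactly: induction on $r$ with base case $r=0$ given by Khovanov's theorem, the inductive step via the long exact sequence of \cref{main:catVas}, Euler characteristics, and the Vassiliev skein relation. The only difference is that you explicitly note the finiteness/boundedness needed to take Euler characteristics, which the paper leaves implicit.
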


As it is well-known, Kontsevich \cite{Kontsevich1993} obtained a sequence of Vassiliev invariants whose target space, denoted by $\mathcal{A} (S^1)$, generated by chord diagrams.   
One of the relations is called the FI relation: every chord diagram having an isolated chord vanishes.  It means that every Vassiliev invariant $v$ of order $n$ satisfies that $v (L)$ $=$ $0$ for any singular link $L$ one of whose double points is of the form (\ref{eq:KH-FIrel:FIsing}).  We categorify it and obtain the following result.       
\begin{mainthm}
\label{main:KH-FIrel}
Khovanov homology satisfies the FI relation.
More precisely, suppose $L$ is a singular link one of whose double points is of the form
\begin{equation}
\label{eq:KH-FIrel:FIsing}
\diagFiSing
\quad.
\end{equation}
Then, we have $\mathit{Kh}^{i,j}(L;\mathbb F_2)=0$ for all integers $i$ and $j$.
\end{mainthm}

The plan of this paper is as follows.
After recalling the definition of Khovanov homology for ordinary links in \cref{sec:quick-review}, we introduce the genus-$1$ map \eqref{eq:intro:genu1} in \cref{sec:crs-Kh}.
Using this, we build up a chain complex $C(D;\mathbb F_2)$ for singular link diagrams $D$ in \cref{sec:KHsing}.
The basic tools are cofiber sequences and their \emph{cubes} \cite{Steiner1986, BrownEllis1988}; we give a brief introduction for them for the convenience of the readers.
In \cref{sec:invariance}, we show invariances of $\HP$ to prove \cref{main:KHinv}.  
Finally, in \cref{sec:prop}, we investigate basic properties of Khovanov homology on singular links.
\Cref{main:catVas} and \cref{main:KH-FIrel} will be proved there.

\section{Quick review of Khovanov homology}
\label{sec:quick-review}

We quickly review the construction of Khovanov homology for ordinary links.
\subsection{TQFT and Frobenius algebras}
\label{sec:quick-review:tqft}
%
%
%
\begin{definition}
Let $\mathbf{Cob}_2$ be the $2$-dimensional cobordism category.  
Let $k$ be a commutative ring.
Then, a \emph{$2$-dimensional topological quantum field theory} (\emph{TQFT} for short) is a symmetric monoidal functor
\[
Z:\mathbf{Cob}_2\to\mathbf{Mod}_k
\ ,
\]
where $\mathbf{Mod}_k$ is the category of $k$-modules and $k$-homomorphisms.
\end{definition}
A is well-known, a Frobenius algebra $A$ gives rise to a $2$-dimensional TQFT $Z_A$ with  
$Z_A(S^1) = A$.
The following are examples of TQFT operations $Z_A$: 
\[
\begin{array}{rcc}
  Z_A\left(
  \tikz[scale=.4, baseline=-.5ex]{
  \coordinate (Ai1) at (-2,1.7);
  \draw (Ai1) +(90:.5 and 1) arc (90:270:.5 and 1);
  \draw[dashed] (Ai1) +(-90:.5 and 1) arc (-90:90:.5 and 1);
  \coordinate (Ai2) at (-2,-1.7);
  \draw (Ai2) +(90:.5 and 1) arc (90:270:.5 and 1);
  \draw[dashed] (Ai2) +(-90:.5 and 1) arc (-90:90:.5 and 1);
  \coordinate (Ao) at (2,0);
  \draw (Ao) ellipse (.5 and 1);
  \draw ($(Ai1)+(0,-1)$) .. controls +(1.8,0) and +(1.8,0) .. ($(Ai2)+(0,1)$);
  \draw ($(Ai1)+(0,1)$) .. controls +(2,0) and +(-2,0) .. ($(Ao)+(0,1)$);
  \draw ($(Ai2)+(0,-1)$) .. controls +(2,0) and +(-2,0) .. ($(Ao)+(0,-1)$);
  }
  \right)
  = \mu
  & :A\otimes_k A\to A\ ;
  & a_1\otimes a_2\mapsto a_1a_2, 
  \\[9ex]
  Z_A\left(
  \tikz[scale=.4, baseline=-.5ex]{
  \coordinate (Ai) at (-2,0);
  \draw (Ai) +(90:.5 and 1) arc (90:270:.5 and 1);
  \draw[dashed] (Ai) +(-90:.5 and 1) arc (-90:90:.5 and 1);
  \coordinate (Ao1) at (2,1.7);
  \draw (Ao1) ellipse (.5 and 1);
  \coordinate (Ao2) at (2,-1.7);
  \draw (Ao2) ellipse (.5 and 1);
  \draw ($(Ai)+(0,1)$) .. controls +(2,0) and +(-2,0) .. ($(Ao1)+(0,1)$);
  \draw ($(Ai)+(0,-1)$) .. controls +(2,0) and +(-2,0) .. ($(Ao2)+(0,-1)$);
  \draw ($(Ao1)+(0,-1)$) .. controls +(-1.8,0) and +(-1.8,0) .. ($(Ao2)+(0,1)$);
  }
  \right)
  = \Delta
  & :A\to A\otimes_k A\ ;
  & a\mapsto a^{(1)}\otimes a^{(2)}.  
\end{array}
\]
We here use Sweedler notation for the comultiplication.
In this paper, we mainly consider the Frobenius algebra  $A=\mathbb Z[x]/(x^2)$ with   
$\begin{gathered}
\Delta(1)=x\otimes 1 + 1\otimes x, \Delta(x)=x\otimes x.    
\end{gathered}$
In this case, we give a grading on $Z_A (S^1) \cong A$ as $\deg 1=1$ and $\deg x=-1$.
%
\subsection{Khovanov complex
}
\label{sec:quick-review:grmod}

For a link $L\subset S^3$, fix its diagram $D\subset\mathbb R^2$, and put $c(D)$ the set of crossings of $D$.
Each subset $s\subset c(D)$ is called \emph{state} on $D$; we write $|s|$ the cardinality.    
For each state $s$ on $D$, we write $D_s$ the closed $1$-dimensional manifold obtained by \emph{smoothing} all the crossings of $D$ according to $s$:
\[
\diagSmoothH
\;\xleftarrow[\text{$0$-smoothing}]{\displaystyle c\notin s}\;
\diagCrossNeg{c}
\;\xrightarrow[\text{$1$-smoothing}]{\displaystyle c\in s}\;
\diagSmoothV
\quad.
\]
For each integer $i$, define a graded abelian group $\overbar C^i(D)$ by
\[
\overbar C^{i,j}(D)
\coloneqq \bigoplus_{|s|=i} Z_A(D_s)^{j-i}
\ ,
\]
here, $Z_A(D_s)^r$ is the homogeneous component of degree $r$ of the graded abelian group $Z_A (D_s)$.    
Then, for each $c \notin s$, we write $\delta_c$  the map corresponding to a cobordism from a $0$-smoothing to a $1$-smoothing for some $j$:  
\[ 
\delta_c := Z_A \left( \cobSaddleHV{-.5ex}{.4}\ \right): Z_A \left( \diagSmoothH \right)^{j}
\to Z_A \left( \diagSmoothV \right)^{j-1}
\ .
\]  
The differential $d$ is defined as follows: 
\[
d=\sum_{c\in c(D)}\pm\delta_c:\overbar C^{i,j}(D)\to\overbar C^{i+1,j}(D)
\ ,
\]
here we assume $\delta_c=0$ on $Z_A(D_s)$ with $c\in s$, 
and for the definitions of signs of $\delta_c$, the reader can refer to \cite{Khovanov2000} and \cite{BarNatan2005}.    
The resulting complex is called the \emph{unshifted Khovanov complex} of $D$.  
Considering the orientation on the diagram $D$, let us denote by $n_+$ and by $n_-$ the number of positive and negative crossings respectively.
We define the \emph{Khovanov complex} $C(D)$ by
\[
C^{i,j}(D)
\coloneqq \overbar C^{i+n_-,\ j+2n_- -n_+}(D)
\]
with the same differential as $\overbar C(D)$.
We set
\[
\mathit{Kh}^{i,j}(D)
\coloneqq H^i(C^{\ast,j}(D))
\]
and call it the \emph{Khovanov homology of $D$}.
One may also consider the version with coefficients as
\[
\mathit{Kh}^{i,j}(D;A)
\coloneqq H^i(C^{\ast,j}(D)\otimes A)
\]
for each abelian group $A$; we call it the \emph{Khovanov homology of $D$ with coefficients in $A$}.
Note that there is a canonical identification $\mathit{Kh}^{i,j}(D)=\mathit{Kh}^{i,j}(D;\mathbb Z)$.

\begin{theorem}[Khovanov \cite{Khovanov2000}]
\label{theo:Khovanov}
Let $L$ be an oriented link and $D$ a diagram of $L$.
\begin{enumerate}[label=\upshape(\arabic*)]
  \item If $D'$ is a diagram which is connected to $D$ by a sequence of Reidemeister moves, then there is a quasi-isomorphisms $C(D)\to C(D')$.
In particular, for every abelian group $A$, $\mathit{Kh}^{i,j}(D;A)$ is an invariant for $L$, so it is safe to write $\mathit{Kh}^{i,j}(L;A)\coloneqq\mathit{Kh}^{i,j}(D;A)$.
  \item For any field $k$, the formal sum
\[
\left[\mathit{Kh}(L;k)\right]_q
\coloneqq \sum_{i,j} (-1)^iq^j\dim_k \mathit{Kh}^{i,j}(L;k)
\]
equals the unnormalized Jones polynomial of $L$.
\end{enumerate}
\end{theorem}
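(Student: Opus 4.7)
The plan is to handle the two parts of the theorem separately: Reidemeister invariance in (1) and the Euler characteristic identification in (2). Throughout, I would work first with the unshifted complex $\overbar C(D)$ and then track the effect of the normalization shifts by $n_\pm$ at the end.

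For part (1), I would construct an explicit chain homotopy equivalence $C(D)\simeq C(D')$ for each of the three Reidemeister moves, applied locally inside a small tangle with the rest of the diagram held fixed. For an R1 move adding, say, a positive kink, the two smoothings of the new crossing give either the original strand together with an extra small circle or the original strand joined by a $1$-handle; using $Z_A(S^1)\cong A=\mathbb Z[x]/(x^2)$ with basis $\{1,x\}$, the resulting two-term cube decomposes, up to isomorphism, as a direct sum of the complex of the simplified diagram with an acyclic summand, and the grading shift by $2n_--n_+$ absorbs the extra positive crossing exactly so that bigradings match. For R2, the four-term cube on the two new crossings has two acyclic summands that can be eliminated by Gaussian elimination, leaving a complex isomorphic to that of the simplified diagram. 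For R3, I would either reduce both sides to R2 and compare the remaining vertices by direct computation, or, more cleanly, lift the argument to Bar-Natan's cobordism category modulo the S/T/4Tu relations, where R3 invariance becomes a purely geometric statement, and then apply $Z_A$. Since the homotopy equivalences live at the chain level, tensoring with any abelian group $A$ preserves them and gives the invariance of $\mathit{Kh}^{i,j}(L;A)$.

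For part (2), I would use that the graded Euler characteristic of a chain complex over a field equals that of its homology, so
\[
\left[\mathit{Kh}(L;k)\right]_q = \sum_{i,j}(-1)^i q^j\dim_k C^{i,j}(D).
\]
Expanding via the state sum definition of $C(D)$ and using that $Z_A(S^1)$ has graded dimension $q+q^{-1}$, each state $s$ of cardinality $i$ contributes $(-1)^i q^i(q+q^{-1})^{|D_s|}$ to the unshifted Euler characteristic, reproducing Kauffman's bracket state sum. The normalization shifts by $n_-$ and $2n_--n_+$ are precisely the writhe correction needed to convert Kauffman's bracket into the unnormalized Jones polynomial $\widetilde V(L)$.

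The main obstacle is the R3 invariance in part (1): R1 and R2 are essentially Gaussian-elimination exercises, and part (2) is bookkeeping, but R3 requires comparing two cubes of resolutions of rank $8$, and producing the explicit chain homotopy by hand is cumbersome. The Bar-Natan cobordism formalism gives by far the cleanest proof, but at the cost of first developing the S/T/4Tu relations and the delooping isomorphism before specialising back to $Z_A$.
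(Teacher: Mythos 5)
The paper states this as a result cited from Khovanov's original paper \cite{Khovanov2000} and does not give a proof of it, so there is no in-paper argument to compare yours against. Your sketch is a correct outline of the standard argument: delooping and Gaussian elimination for R1 and R2, either a direct comparison of the two R3 cubes after removing acyclic summands or the lift to Bar-Natan's cobordism category, and the reduction of the graded Euler characteristic to the Kauffman bracket state sum with the $n_\pm$ shifts supplying the writhe normalization. A fully rigorous version would also need to treat both signs of the R1 kink, both R2 variants, and all oriented versions of R3, and to verify that the local chain homotopy equivalences extend consistently when the tangle is embedded in a larger diagram; these are exactly the points that the references \cite{Khovanov2000,BarNatan2005,Viro2004} carry out in detail.
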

%
%
\begin{notation}
\label{note:shiftop}
With regard to shifting on (bi)graded modules, it is convenient to use shift operators.
Namely, if $W=\{W^{i,j}\}_{i,j}$ is a bigraded module, then we define a bigraded module $W[k]\{l\}$ by
\[
W[k]\{l\}^{i,j}
\coloneqq W^{i-k,j-l}
\ .
\]
For example, we have $C(D)=\overbar C(D)[-n_-]\{-2n_- + n_+\}$ for a link diagram $D$.
\end{notation}
\subsection{Enhanced states}
\label{sec:quick-review:enhst}

To investigate Khovanov complexes, it is convenient to describe them in terms of bases.
The following notion was well-known in the study of Kauffman brackets.

\begin{definition}
Let $D$ be a link diagram with the set $c(D)$ of crossings.
Then, an \emph{enhanced state} on $D$ is a pair $(s,\rho)$ of a state $s\subset c(D)$ and a map
\[
\rho:\pi_0(D_s)\to \{1,x\}\ ,
\]
here $\pi_0(D_s)$ is the set of connected components of the $1$-manifold $D_s$.
\end{definition}

For an enhanced state $(s,\rho)$, we define its \emph{degree} by
\[
\deg(s,\rho)
\coloneqq \#\{\alpha\in\pi_0(D_s)\mid\rho(\alpha)=1\} - \#\{\alpha\in\pi_0(D_s)\mid\rho(\alpha)=x\}
\ .
\]
To a given enhanced state $(s,\rho)$ on $D$, we assign an element
\[
v_{(s,\rho)}
\coloneqq \bigoplus_{\alpha\in\pi_0(D_s)}\rho(\alpha)
\in Z_A(D_s)^{\deg(s,\rho)}
= Z_A(D_s)\{|s|\}^{\deg(s,\rho)+|s|}
\subset \overbar C^{|s|,\deg(s,\rho)+|s|}(D)
\quad.
\]
The following is obvious.

\begin{lemma}
\label{lem:KH-basisEnh}
Let $D$ be a diagram.
Then, for each pair $(i,j)$ of integers, the set
\[
\left\{v_{(s,\rho)}\;\middle|\;\text{$(s,\rho)$: enhanced state with $|s|=i$ and $\deg(s,\rho)=j-i$}\right\}
\]
forms a basis of the abelian group $\overbar C^{i,j}(D)$.
\end{lemma}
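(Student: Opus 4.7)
The plan is to unwind the definitions and verify that a basis for each direct summand $Z_A(D_s)$ is given exactly by the enhancements $\rho$.

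First, I would fix a state $s$ with $|s|=i$ and analyze $Z_A(D_s)$. Since $D_s$ is a closed $1$-manifold, it decomposes as a disjoint union of circles indexed by $\pi_0(D_s)$. Because $Z_A$ is a symmetric monoidal functor and $Z_A(S^1)=A$, we obtain a canonical identification
\[
Z_A(D_s) \;\cong\; \bigotimes_{\alpha\in\pi_0(D_s)} A
\ ,
\]
where the tensor product is taken over $\mathbb Z$. (Some choice of ordering of the components is implicit but does not affect the argument, since the symmetric monoidal structure identifies the different orderings.)

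Next, I would use that $A=\mathbb Z[x]/(x^2)$ is free as a $\mathbb Z$-module on the basis $\{1,x\}$, with $\deg 1=1$ and $\deg x=-1$. Consequently, the tensor product above is free on the set of functions $\rho:\pi_0(D_s)\to\{1,x\}$, with corresponding basis element $\bigotimes_\alpha \rho(\alpha) = v_{(s,\rho)}$. The degree of this basis element in $Z_A(D_s)$ is the sum of the degrees of the factors, which is exactly
\[
\#\{\alpha\mid\rho(\alpha)=1\}-\#\{\alpha\mid\rho(\alpha)=x\}
=\deg(s,\rho)\ .
\]

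Finally, I would assemble the pieces: by definition $\overbar C^{i,j}(D)=\bigoplus_{|s|=i}Z_A(D_s)^{j-i}$, so a $\mathbb Z$-basis for $\overbar C^{i,j}(D)$ is obtained by collecting, over all states $s$ of size $i$, the basis elements $v_{(s,\rho)}$ of degree $j-i$ inside $Z_A(D_s)$. This is exactly the asserted indexing set. There is no real obstacle here; the only thing to be careful about is tracking the degree convention and the fact that the direct sum over states already isolates the $|s|=i$ constraint, while the grading constraint on $Z_A(D_s)$ imposes $\deg(s,\rho)=j-i$.
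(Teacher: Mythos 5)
Your proof is correct and is precisely the unwinding-of-definitions argument that the paper has in mind when it calls the lemma ``obvious'': identify $Z_A(D_s)\cong\bigotimes_{\alpha\in\pi_0(D_s)}A$ via monoidality, use that $A$ is free on $\{1,x\}$, match the degree conventions, and sum over states. There is no gap; the paper simply omits the verification.
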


\begin{remark}
The use of enhanced states for Khovanov homology is due to Viro \cite{Viro2004}.
\end{remark}

We depict an enhanced state $(s,\rho)$ on the diagram $D$ in the following way:
\begin{itemize}
  \item for each crossing $c\in c(D)$, draw a line segment in the direction of the smoothing applied in $D_s$:
\[
\begin{array}{ccc}
  \tikz{\node at (0,0) {\useTikzBox{diagCrossNeg}}; \draw[green,ultra thick] (-.5,0) -- (.5,0); \node at (0,1.5ex) {$c$}} &\quad& \tikz{\node at (0,0) {\useTikzBox{diagCrossNeg}}; \draw[blue,ultra thick] (0,-.5) -- (0,.5); \node at (-1.5ex,0) {$c$}} \\
 c \notin s &\quad& c \in s
\end{array}
\quad,
\]
here we use different colors to indicate different states;
  \item attach the label $\rho(\alpha)$ to a circle of $D$ such that the corresponding circle  in $D_s$ is a part of $\alpha$.
\end{itemize}

\section{Crossing change on Khovanov complexes}
\label{sec:crs-Kh}

Let $D$ be a diagram of a singular link with a double point $b$, and put $D_+$ and $D_-$ the diagrams obtained from $D$ by resolving $b$ into a positive and negative crossing respectively.
For a link invariant $v$, Vassiliev skein relation \eqref{eq:V-skein} asserts that the value $v(D)$ measures the effect of crossing change at $b$ on $v$.
In this section, we discuss an analogue on Khovanov homology.
More precisely, we define a chain map $\widehat\Phi:C(D_-)\to C(D_+)$ and see how this categorifies the subtraction in Vassiliev skein relation.

\subsection{Mapping cones as chain-level subtraction}
\label{sec:cone}

We begin with a quick review on mapping cone construction on chain complexes.

Let $\mathcal A$ be an abelian category.
Recall that, if $f:X\to Y$ is a chain map between chain complexes in $\mathcal A$, then the \emph{mapping cone} $\operatorname{Cone}(f)$ is a chain complex with a short exact sequence
\[
0 \to Y\xrightarrow{i_Y} \operatorname{Cone}(f)\xrightarrow{p_X} X[-1]\to 0
\]
so that the following sequence is exact:
\[
\cdots
\xrightarrow{(p_X)_\ast} H^i(X)
\xrightarrow{f_\ast} H^i(Y)
\xrightarrow{(i_Y)_\ast} H^i(\operatorname{Cone}(f))
\xrightarrow{(p_X)_\ast} H^{i+1}(X)
\to \cdots
\quad.
\]
Specifically, in case where $\mathcal A$ is the category of vector spaces over a field $k$, we obtain the following identity on the Euler characteristics as long as they make sense:
\begin{equation}
\label{eq:exact-Euler}
\chi(X)-\chi(Y)+\chi(\operatorname{Cone}(f))=0
\ .
\end{equation}
In this point of view, we may say $\operatorname{Cone}(f)$ \emph{categorifies} the subtraction $\chi(Y)-\chi(X)$.

Actually, $\operatorname{Cone}(f)$ plays an important roles on the homotopy theory on chain complexes.
The following are the properties which characterize $\operatorname{Cone}(f)$ as a \emph{homotopy cofiber} of $f$.

\begin{lemma}[{\cite[Proposition~3.1.3]{Verdier1996}}]
\label{lem:cone=cofib}
Suppose we have a sequence
\[
X\xrightarrow{f} Y\xrightarrow{g} Z
\]
of chain maps in an abelian category $\mathcal A$.
Then, there is a one-to-one correspondence between the following data:
\begin{enumerate}[label=\upshape(\alph*)]
  \item\label{sub:cone=cofib:null} chain homotopies from the composition $gf$ to $0$;
  \item\label{sub:cone=cofib:cone} chain maps $\widehat g:\operatorname{Cone}(f)\to Z$ which makes the diagram below commute:
\[
\begin{tikzcd}
Y \ar[r,"g"] \ar[d,hook,"i_Y"']& Z \\
\operatorname{Cone}(f) \ar[ur,"\widehat g"'] &
\end{tikzcd}
\quad.
\]
\end{enumerate}
\end{lemma}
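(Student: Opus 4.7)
The plan is to use the standard explicit realization of the mapping cone and translate the chain-map condition for $\widehat g$ into a chain-homotopy equation on its off-diagonal component.

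First I would write $\operatorname{Cone}(f)$ concretely as the graded object $Y \oplus X[-1]$ with differential
$$d_{\operatorname{Cone}(f)}(y, x) = \bigl(d_Y(y) + f(x),\; -d_X(x)\bigr),$$
and with $i_Y(y) = (y, 0)$. Then any graded map $\widehat g : \operatorname{Cone}(f) \to Z$ satisfying $\widehat g \circ i_Y = g$ is uniquely of the form $\widehat g(y, x) = g(y) + h(x)$, where $h$ is a graded map $X[-1] \to Z$, i.e.\ a degree $-1$ collection of maps $X^{n+1} \to Z^n$. This step is pure bookkeeping and sets up a bijection between graded maps $\widehat g$ extending $g$ and arbitrary candidate homotopies $h$, without yet imposing any chain-level condition.

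Next I would impose $d_Z \widehat g = \widehat g \, d_{\operatorname{Cone}(f)}$ and unpack it on $(y, x)$. Because $g$ is already a chain map, the part supported on $y$ contributes nothing new, and the part supported on $x$ collapses to
$$d_Z h(x) + h(d_X x) = g(f(x)),$$
which is precisely the condition for $h$ to be a chain homotopy from $gf$ to $0$. The bijection is then immediate in both directions: send $\widehat g$ to its off-diagonal component $h$, and conversely assemble a given homotopy $h$ into $\widehat g(y, x) = g(y) + h(x)$, which automatically satisfies $\widehat g \circ i_Y = g$.

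There is no substantive obstacle in this argument; the only subtlety is the sign convention in the cone differential, since the minus sign in front of $d_X$ is exactly what matches the chain-map equation for $\widehat g$ with the standard homotopy convention $dh + hd = gf$. Once the convention is pinned down, the correspondence is forced and visibly mutually inverse, and the same argument works in any abelian category $\mathcal{A}$ since it uses only additivity and the universal property of the direct sum.
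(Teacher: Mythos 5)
Your proof is correct. The paper does not give its own proof of this lemma; it simply cites \cite[Proposition~3.1.3]{Verdier1996} and uses the statement as a black box, so your argument is a self-contained unpacking of what the paper takes on faith. Your approach is the standard one: realize $\operatorname{Cone}(f)$ concretely as $Y \oplus X[-1]$ with $d(y,x) = (d_Y y + f(x), -d_X x)$, observe that the constraint $\widehat g \circ i_Y = g$ pins down the $Y$-component of any graded extension and leaves the $X[-1]$-component free, and then unwind the chain-map identity on the $x$-slot to recover $d_Z h + h\, d_X = gf$. The only genuine care-point, which you flag, is that the sign in front of $d_X$ in the cone differential is exactly what makes the $y$-terms cancel via $d_Z g = g\, d_Y$ and leaves the homotopy equation; with a different sign convention one would instead get a homotopy from $-gf$ to $0$, so one must fix conventions consistently. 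Your check that the two assignments $\widehat g \mapsto h$ and $h \mapsto \widehat g(y,x) = g(y) + h(x)$ are mutually inverse is immediate, and nothing in the argument uses more of $\mathcal A$ than additivity, so it holds in any abelian (indeed additive) category.
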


This motivates us to introduce the following notion.

\begin{definition}
\label{def:cofib-seq}
Let $\mathcal A$ be an abelian category.
Suppose we have a sequence of chain complexes
\begin{equation}
\label{eq:def:cofib-seq:seq}
X\xrightarrow{f} Y\xrightarrow{g} Z
\end{equation}
in $\mathcal A$ together with a chain homotopy $\Psi:gf\sim 0$.
Then, we call it a \emph{cofiber sequence} if the induced map $\operatorname{Cone}(f)\to Z$ is a quasi-isomorphism; i.e. it induces isomorphisms on homology groups.
In this case, we call the map $Y\to Z$ (or even $Z$ itself by abuse of notation) a \emph{homotopy cofiber} of $f$.
\end{definition}

\begin{remark}
The reader should note that, for a sequence \eqref{eq:def:cofib-seq:seq}, whether the induced map $\operatorname{Cone}(f)\to Z$ is a quasi-isomorphism does depend on a chain homotopy $\Psi$.
\end{remark}

\begin{example}
If $X$ is a chain complex with a subcomplex $X'\subset X$, then the sequence
\begin{equation}
\label{eq:quot=cofib}
X'\overset{i}\hookrightarrow X\overset{p}\twoheadrightarrow X/X'
\end{equation}
is a cofiber sequence.
\end{example}

\begin{example}
Let $f:X\to Y$ be a quasi-isomorphism between chain complexes.
Then, a complex $Z$ is a cofiber of $f$ if and only if all the homology groups $H^\ast(Z)$ vanish.
\end{example}

The following results follow immediately from \cref{def:cofib-seq}.

\begin{lemma}
\label{lem:cofib-longex}
For every cofiber sequence $X\xrightarrow{f}Y\xrightarrow{g} Z$, there is a map $\delta:H^\ast(Z)\to H^{\ast+1}(X)$ so that the sequence below is exact:
\[
\cdots
\to H^i(X)
\xrightarrow{f_\ast} H^i(Y)
\xrightarrow{g_\ast} H^i(Z)
\xrightarrow{\delta} H^{i+1}(X)
\xrightarrow{f_\ast} H^{i+1}(Y)
\to \cdots
\quad.
\]
\end{lemma}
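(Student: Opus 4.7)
The plan is to reduce the statement to the standard long exact sequence associated with the short exact sequence
\[
0 \to Y \xrightarrow{i_Y} \operatorname{Cone}(f) \xrightarrow{p_X} X[-1] \to 0
\]
and then transport it along the quasi-isomorphism $\widehat g:\operatorname{Cone}(f)\to Z$ provided by the cofiber structure via \cref{lem:cone=cofib}.

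First, I would invoke the standard snake-lemma argument on the short exact sequence displayed above to obtain a long exact sequence
\[
\cdots \to H^i(Y) \xrightarrow{(i_Y)_\ast} H^i(\operatorname{Cone}(f)) \xrightarrow{(p_X)_\ast} H^{i+1}(X) \xrightarrow{\partial} H^{i+1}(Y) \to \cdots,
\]
and then verify (this is the one genuine computation) that the connecting homomorphism $\partial$ coincides with $f_\ast$. This identification is purely a matter of unwinding the cone differential: a cocycle $x\in X^{i+1}$ lifted to $(0,x)\in\operatorname{Cone}(f)^i=Y^i\oplus X^{i+1}$ has image $(f(x),0)$ under the cone differential, exhibiting $\partial[x]=[f(x)]$.

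Next, using that $\widehat g$ is a quasi-isomorphism by hypothesis, I would define
\[
\delta \;\coloneqq\; (p_X)_\ast \circ \widehat g_\ast^{-1} : H^i(Z) \to H^{i+1}(X),
\]
and replace every occurrence of $H^i(\operatorname{Cone}(f))$ in the sequence above by $H^i(Z)$ via $\widehat g_\ast$. Exactness is preserved because $\widehat g_\ast$ is an isomorphism. It remains to check that after this substitution the map $H^i(Y)\to H^i(Z)$ is indeed $g_\ast$; but this is immediate from the relation $\widehat g\circ i_Y = g$ in part~\ref{sub:cone=cofib:cone} of \cref{lem:cone=cofib}, which gives $\widehat g_\ast \circ (i_Y)_\ast = g_\ast$.

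There is no real obstacle here: the whole proof is an assemblage of the standard mapping-cone long exact sequence with the definitional quasi-isomorphism $\widehat g$. The only item requiring care — and the only place a sign subtlety could intrude — is the identification $\partial = f_\ast$, which should be pinned down explicitly to match the orientation and sign conventions used for $\operatorname{Cone}(f)$ elsewhere in the paper.
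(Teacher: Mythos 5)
Your proof is correct, and it is essentially the argument the paper has in mind: the authors simply assert that this lemma "follows immediately from Definition~\ref{def:cofib-seq}," and the only natural way to make that precise is exactly what you do — take the standard long exact sequence of the short exact sequence $0\to Y\to\operatorname{Cone}(f)\to X[-1]\to 0$, identify the connecting map with $f_\ast$, and transport $H^\ast(\operatorname{Cone}(f))$ to $H^\ast(Z)$ along the quasi-isomorphism $\widehat g$ furnished by the cofiber-sequence data, using $\widehat g\circ i_Y=g$ to match the middle arrow. Your cautionary remark about the sign of $\partial$ versus $f_\ast$ is well placed but harmless here, since a global sign on one arrow does not affect exactness.
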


\begin{corollary}
\label{cor:cofibseq-Euler}
If there is a cofiber sequence $X\to Y\to Z$ of chain complexes over a field $k$, then we have
\[
\chi(X)-\chi(Y)+\chi(Z)=0
\]
provided the homology groups $H^\ast(X)$, $H^\ast(Y)$, and $H^\ast(Z)$ are all finite dimensional.
\end{corollary}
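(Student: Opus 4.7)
The plan is to invoke the long exact sequence supplied by \cref{lem:cofib-longex} and reduce the claim to the standard fact that in a bounded exact sequence of finite-dimensional vector spaces the alternating sum of dimensions vanishes. First, \cref{lem:cofib-longex} produces a long exact sequence
\[
\cdots \to H^i(X) \xrightarrow{f_\ast} H^i(Y) \xrightarrow{g_\ast} H^i(Z) \xrightarrow{\delta} H^{i+1}(X) \to \cdots.
\]
Under the finite-dimensionality hypothesis, only finitely many of the terms $H^i(X)$, $H^i(Y)$, $H^i(Z)$ are nonzero, so this long exact sequence truncates to a bounded exact sequence of finite-dimensional $k$-vector spaces, and the three Euler characteristics on the right-hand side of \eqref{eq:exact-Euler} (applied to homology) are well-defined finite sums.

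Second, I would concatenate the sequence by reindexing: set $V_{3i}=H^i(X)$, $V_{3i+1}=H^i(Y)$, $V_{3i+2}=H^i(Z)$, so that the long exact sequence becomes a single bounded exact sequence $\cdots \to V_n \to V_{n+1} \to \cdots$ of finite-dimensional vector spaces. A short induction on the length of such a sequence, invoking the rank-nullity identity $\dim V = \dim \ker + \dim \operatorname{im}$ at each step (equivalently, splitting into short exact sequences $0\to\operatorname{im}\to\ker\to0$-type pieces), yields $\sum_n (-1)^n \dim V_n = 0$. Substituting back and using $(-1)^{3i}=(-1)^i$, the left-hand side equals
\[
\sum_i (-1)^i\bigl(\dim H^i(X) - \dim H^i(Y) + \dim H^i(Z)\bigr) = \chi(X) - \chi(Y) + \chi(Z),
\]
which is the desired identity.

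There is no genuine obstacle in this argument: the corollary is a bookkeeping consequence of \cref{lem:cofib-longex}, and every step is standard homological algebra. The only subtlety is to make sure that the three Euler characteristics in the statement are actually meaningful, but this is precisely the role of the finite-dimensionality hypothesis. If one prefers, one can bypass the reindexing trick and argue directly by the three-term alternating sum $\dim H^i(X)-\dim H^i(Y)+\dim H^i(Z)$ telescoping through the connecting maps $\delta$, but the reindexed formulation keeps the signs transparent.
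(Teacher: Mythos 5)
Your proposal is correct and follows the route the paper intends: invoke the long exact sequence of \cref{lem:cofib-longex} and then use the vanishing of the alternating sum of dimensions in a bounded exact sequence of finite-dimensional vector spaces. The paper itself offers no written proof (it records the corollary as following ``immediately'' from \cref{def:cofib-seq} via \cref{lem:cofib-longex}), and your spelled-out bookkeeping---the reindexing $V_{3i}=H^i(X)$, $V_{3i+1}=H^i(Y)$, $V_{3i+2}=H^i(Z)$ together with the sign check $(-1)^{3i}=(-1)^i$---is exactly the standard argument being invoked.
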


Finally, the homotopy invariance of cofiber sequences is sated as follows.
The proof will be found in \cite{Weibel1994} for example.

\begin{proposition}
\label{prop:hinv-cofib}
Suppose we have a commutative diagram
\begin{equation}
\label{eq:hinv-cofib:univ}
\begin{tikzcd}
X' \ar[r,"f'"] \ar[d,"u"'] & Y' \ar[d,"v"] \ar[r,"h'"] & W' \ar[d,"w"] \\
X \ar[r,"f"] & Y \ar[r,"h"] & W
\end{tikzcd}
\end{equation}
of chain maps.
Suppose in addition that there are chain homotopies $\Psi:gf\sim 0$ and $\Psi':g'f'\sim 0$ such that $\Psi u =w\Psi'$.
Then, for every cofiber $Y\xrightarrow{g} Z$ of $f$ which factors through the map $h$ in \eqref{eq:hinv-cofib:univ}, there is a commutative square:
\begin{equation}
\label{eq:hinv-cofib:ind}
\begin{tikzcd}
\operatorname{Cone}(f') \ar[r,"\widehat h'"] \ar[d] & W' \ar[d,"s"] \\
Z \ar[r] & W
\end{tikzcd}
\quad.
\end{equation}
Furthermore, the vertical arrows in \eqref{eq:hinv-cofib:ind} are quasi-isomorphisms as soon as so are those in \eqref{eq:hinv-cofib:univ}.
\end{proposition}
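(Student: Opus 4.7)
The plan is to build the square of the conclusion in three stages using the universal property of mapping cones (\cref{lem:cone=cofib}), and then upgrade the vertical arrows to quasi-isomorphisms by the five lemma applied to the long exact sequences of \cref{lem:cofib-longex}.

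First, I would produce the horizontal and vertical arrows individually. The chain homotopy $\Psi'$ gives, by \cref{lem:cone=cofib}, a canonical chain map $\widehat{h}':\operatorname{Cone}(f')\to W'$, which is the top horizontal arrow. For the left vertical arrow, the left commutative square $vf'=fu$ makes the mapping cone functorial: there is an induced chain map $\operatorname{Cone}(f')\to\operatorname{Cone}(f)$ given on the underlying summands $Y'\oplus X'[-1]$ by $v\oplus u$. Post-composing with the given cofiber map $\operatorname{Cone}(f)\to Z$ yields the left vertical arrow. The right vertical map $s:W'\to W$ is taken to be $w$ from the hypothesis, and the bottom horizontal $Z\to W$ comes from the cofiber universal property applied to $h$, using the factorization of $g$ through $h$.

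Next, I would verify that the square commutes. This is a componentwise check on $\operatorname{Cone}(f')=Y'\oplus X'[-1]$: on the $Y'$ summand it reduces to the commutativity $wh'=hv$ of the right square of \eqref{eq:hinv-cofib:univ} followed by the cofiber map, and on the $X'[-1]$ summand it reduces precisely to the compatibility $\Psi u=w\Psi'$ of the chain homotopies, which holds by hypothesis. Finally, for the quasi-isomorphism assertion, \cref{lem:cofib-longex} applied to both $f$ and $f'$ produces two long exact sequences in homology, and the constructed square fits them into a ladder of long exact sequences. When $u$, $v$, and $w$ are quasi-isomorphisms, the five lemma forces the left vertical arrow $\operatorname{Cone}(f')\to Z$ to be a quasi-isomorphism as well, while $s=w$ is one by hypothesis.

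The main obstacle is the second step: one must be careful about signs and differential conventions on the mapping cone so that the compatibility $\Psi u=w\Psi'$ feeds in with precisely the right sign to make the $X'[-1]$-part of the square commute on the nose rather than merely up to a further homotopy. Once this bookkeeping is clean, the homology-level argument in the third step is a routine diagram chase.
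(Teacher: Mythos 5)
The paper does not actually prove \cref{prop:hinv-cofib}; it simply defers to Weibel's book. So there is no in-text proof to compare against. That said, your outline is the standard argument and is essentially correct, so let me offer a few points of scrutiny.

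Your three-stage construction is the right one: $\widehat h'$ from $\Psi'$ via \cref{lem:cone=cofib}, functoriality $\operatorname{Cone}(f')\to\operatorname{Cone}(f)$ induced by $(v,u)$, post-composition with the quasi-isomorphism $\operatorname{Cone}(f)\to Z$, and then the componentwise verification on $\operatorname{Cone}(f')\cong Y'\oplus X'[-1]$ followed by the five lemma. You are also right to flag the sign bookkeeping on the $X'[-1]$-summand as the only delicate step; once the differential on $\operatorname{Cone}(f)$ is fixed, the identity $\Psi u = w\Psi'$ is exactly what feeds into the $X'[-1]$-part, and the identity $wh' = hv$ handles the $Y'$-part.

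Two small cautions. First, your phrase ``the bottom horizontal $Z\to W$ comes from the cofiber universal property'' overstates what is available: $\operatorname{Cone}(f)$ has the strict universal property of \cref{lem:cone=cofib}, but a cofiber $Z$, being only quasi-isomorphic to $\operatorname{Cone}(f)$, does not. The hypothesis ``which factors through the map $h$'' is there precisely to hand you the map $Z\to W$ directly; you should take it as given, not try to re-derive it from a universal property. Second, the statement of the proposition is itself notationally loose (it writes $g$ and $g'$ for the maps the diagram labels $h$ and $h'$, and introduces a label $s$ for the right vertical arrow that it never defines); your reading that $s=w$ is the sensible interpretation, and it is also the only one under which the assertion ``the vertical arrows in \eqref{eq:hinv-cofib:ind} are quasi-isomorphisms as soon as so are those in \eqref{eq:hinv-cofib:univ}'' holds trivially for the right column. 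With those two points made explicit, the five-lemma argument on the ladder of long exact sequences closes the proof as you describe, noting that the long exact sequence for $Z$ is the one for $\operatorname{Cone}(f)$ transported along the given quasi-isomorphism, so that the connecting square of the ladder commutes.
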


\begin{remark}
\label{rem:cone=fib}
All the arguments above are \emph{dualizable}.
In particular, for a sequence $X\xrightarrow{f}Y\xrightarrow{g}Z$ of chain maps, the following data are equivalent (cf. \cref{lem:cone=cofib}):
\begin{enumerate}[label=\upshape(\alph*)]
  \item a chain homotopy from $gf$ to $0$;
  \item a chain map $\overbar f:X\to\operatorname{Cone}(g)[1]$ which makes the diagram below commute:
\[
\begin{tikzcd}
& \operatorname{Cone}(g)[1] \ar[d,two heads,"p_Y"] \\
X \ar[r,"f"] \ar[ur,"\overbar f"] & Y
\end{tikzcd}
\quad.
\]
\end{enumerate}
\end{remark}

\subsection{Genus-$1$ map}
\label{sec:genus-1}

We now define a chain map
\[
\widehat\Phi:
C\left(\diagCrossNegUp{}\right)
\to C\left(\diagCrossPosUp{}\right)
\quad.
\]
We first consider the chain map induced by the following cobordism:
\[
\cobHoleVV{-.5ex}{.4}
\;:\;
\diagSmoothUp
\to \diagSmoothUp
\quad.
\]
We denote by $\Phi$ the induced map on Khovaonv complexes.
On the other hand, we also have the following single saddle operations:
\[
\cobSaddleHV{-.5ex}{.4}
\;:\;
\diagSmoothH
\to \diagSmoothUp
\quad,\qquad
\cobSaddleVH{-.5ex}{.4}
\;:\;
\diagSmoothUp
\to \diagSmoothH
\quad.
\]
We denote by $\delta_-$ and $\delta_+$ respectively the map induced on Khovanov complexes.
In terms of the unshifted Khovanov complexes, we obtain the sequence of chain maps below:
\begin{equation}
\label{eq:delta-Phi-delta}
\overbar C\left(\scalebox{0.8}{\diagSmoothH}\right)
\xrightarrow{\delta_-}
\overbar C\left(\scalebox{0.8}{\diagSmoothUp}\right)\{1\}
\xrightarrow{\Phi}
\overbar C\left(\scalebox{0.8}{\diagSmoothUp}\right)\{3\}
\xrightarrow{\delta_+\{3\}}
\overbar C\left(\scalebox{0.8}{\diagSmoothH}\right)\{4\}
\end{equation}
where all maps are of bidegree $(0,0)$ thanks to the degree shifts.

\begin{proposition}
\label{prop:delta-Phi}
In the situation above, the compositions $\Phi\delta_-$ and $\delta_+\Phi$ are zero after tensored with the field $\mathbb F_2$.
\end{proposition}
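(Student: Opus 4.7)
The plan is to reduce the statement to a Frobenius-algebra computation for the TQFT $Z_A$ with $A=\mathbb F_2[x]/(x^2)$, exploiting the key identity that the handle operator $\mu\circ\Delta\colon A\to A$ vanishes modulo~$2$:
\[
\mu\Delta(1)=\mu(x\otimes 1+1\otimes x)=2x\equiv 0,\qquad
\mu\Delta(x)=x^2=0.
\]
Consequently the TQFT assigns the zero map to every cobordism which, after any pair-of-pants decomposition, contains a handle factor; in particular, any cobordism of positive genus gives zero over $\mathbb F_2$.

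Since the ``hole'' cobordism realising $\Phi$ decomposes as the two-saddle composition $\delta_-\circ\delta_+$ (going $V\to H\to V$), the compositions $\Phi\delta_-$ and $\delta_+\Phi$ are realised by the three-saddle cobordisms
\[
\delta_-\delta_+\delta_-\colon H\to V\to H\to V,\qquad
\delta_+\delta_-\delta_+\colon V\to H\to V\to H.
\]
By functoriality of $Z_A$ together with the summand decomposition of the Khovanov complex over the resolutions of all other crossings, it suffices to check vanishing of the induced TQFT map on each individual resolution. For such a resolution, the four boundary points of the local disk are closed up by the surrounding (fully resolved) diagram, and a short planarity argument applied to the annular region outside the local disk shows that only the two ``non-alternating'' matchings of the four points can occur (the alternating matching would force the outside arcs to cross).

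In either admissible configuration, inclusion-exclusion applied to the three saddle discs glued along two intermediate 1-manifolds together with the exterior rectangles gives Euler characteristic $-3$ and exactly three boundary circles for the full cobordism; hence by $\chi=2-2g-b$ it is a genus-one surface, and its TQFT value vanishes modulo~$2$ by the first paragraph. The same argument (or the evident symmetry under reversing time) handles $\delta_+\Phi$. An equivalent but more elementary route proceeds at the Frobenius-algebra level: in the two planar configurations $\delta_\pm$ is realised as either $\mu$ or $\Delta$, so $\Phi\delta_-$ becomes one of $\mu\Delta\mu$ or $\Delta\mu\Delta$, each of which factors through the null map $\mu\Delta$ and is therefore zero. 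I expect the only real obstacle to be verifying the planarity restriction on the outside matchings; once this is in hand, the rest is mechanical.
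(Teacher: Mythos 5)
Your proposal is correct and uses the same approach as the paper: reduce $\Phi\delta_-$ and $\delta_+\Phi$ on each resolution to the Frobenius-algebra compositions $\mu\Delta\mu$ or $\Delta\mu\Delta$, which vanish over $\mathbb F_2$. You fill in more detail than the paper's terse ``direct computation''---spelling out the planarity argument that excludes the alternating exterior matching, observing that both compositions factor through the vanishing handle operator $\mu\circ\Delta$, and offering the equivalent genus-one reading of the three-saddle cobordism---but the core argument coincides with the one in the text.
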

\begin{proof}
Put $A=\mathbb Z[x]/(x^2)$ and regard it as a graded abelian group as in \cref{sec:quick-review:tqft}.
Note that both maps $\Phi\delta_-$ and $\delta_+\Phi$ are constructed by applying the TQFT $Z_A$ on cobordisms of three saddles.
Hence, to verify the statement, it suffices to show that both of the compositions
\[
\mu\circ\Delta\circ\mu:A\otimes A\to A
\ ,\quad
\Delta\circ\mu\circ\Delta:A\to A\otimes A
\]
vanish in characteristic $2$, where $\mu$ and $\Delta$ are the multiplication and the comultiplication respectively.
This is verified by the direct computations.
\end{proof}

\begin{corollary}
\label{cor:Phi-hat}
The sequence \eqref{eq:delta-Phi-delta} induces a chain map
\[
\widehat\Phi:\;
C\left(\diagCrossNegUp{}\right)\otimes \mathbb F_2
\to
C\left(\diagCrossPosUp{}\right)\otimes \mathbb F_2
\]
of bidegree $(0,0)$.
\end{corollary}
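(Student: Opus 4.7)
The plan is straightforward once one unpacks what both sides of $\widehat\Phi$ look like as two-term complexes: $\widehat\Phi$ will be $\Phi\otimes\mathbb F_2$ between the vertical-smoothing summands and zero on the horizontal-smoothing summand, and the chain-map condition will reduce directly to \cref{prop:delta-Phi}.

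First, using the normalization $C(D)=\overbar C(D)[-n_-]\{n_+-2n_-\}$, I spell out both complexes. For the negative crossing ($n_-=1$, $n_+=0$) the complex has cohomological positions $-1$ and $0$, filled respectively by $\overbar C\left(\diagSmoothH\right)\{-2\}$ and $\overbar C\left(\diagSmoothUp\right)\{-1\}$, with differential $\delta_-$. Symmetrically, the positive crossing ($n_+=1$, $n_-=0$) gives positions $0$ and $1$, filled by $\overbar C\left(\diagSmoothUp\right)\{1\}$ and $\overbar C\left(\diagSmoothH\right)\{2\}$, with differential $\delta_+$. The two position-$0$ summands both come from $\overbar C\left(\diagSmoothUp\right)$, and the shift difference between them is $\{2\}$, matching exactly the $-2$ change in internal $q$-grading of the two-saddle cobordism underlying $\Phi$; so $\Phi$ supplies a bidegree-$(0,0)$ map between them.

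Next, I declare $\widehat\Phi$ to be $\Phi\otimes\mathbb F_2$ on those position-$0$ summands and zero on the remaining position-$(-1)$ summand of the source (which has no bidegree-matching image in the target). The chain-map identity $d\,\widehat\Phi=\widehat\Phi\,d$ then has exactly two nontrivial instances: the square leaving source position $-1$ reads $\Phi\circ\delta_-=0$, and the square entering target position $1$ reads $\delta_+\circ\Phi=0$. Both equalities hold over $\mathbb F_2$ by \cref{prop:delta-Phi}, so $\widehat\Phi$ is a chain map of bidegree $(0,0)$.

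Really the only substance is \cref{prop:delta-Phi}; the main obstacle worth mentioning is just tracking the degree shifts so that the $-2$ from the genus-$1$ cobordism cancels against the ambient $\{n_+-2n_-\}$-shifts to give bidegree $(0,0)$. Equivalently one could phrase the construction using the cofiber-sequence formalism of \cref{sec:cone}: \cref{lem:cone=cofib} with the trivial null-homotopy extends $\Phi$ across $\operatorname{Cone}(\delta_-)$; the dual assertion in \cref{rem:cone=fib} then lifts the extension along $\delta_+\{3\}$ to $\operatorname{Cone}(\delta_+\{3\})[1]$; and both cones identify with the Khovanov complexes in question up to a common $\{-2\}$-shift.
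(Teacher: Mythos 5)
Your proposal is correct and is essentially the paper's own argument spelled out concretely: the paper identifies $C\left(\diagCrossNegUp{}\right)$ and $C\left(\diagCrossPosUp{}\right)$ (up to a common $\{-2\}$-shift) with $\operatorname{Cone}(\delta_-)$ and $\operatorname{Cone}(\delta_+\{3\})[1]$ respectively and then invokes \cref{lem:cone=cofib}, \cref{rem:cone=fib}, and \cref{prop:delta-Phi}, which is exactly the cone-formalism paraphrase you give at the end. Your direct unwinding into two-term complexes — with $\widehat\Phi$ equal to $\Phi\otimes\mathbb F_2$ in homological degree $0$ and zero elsewhere, the two chain-map squares reducing to $\Phi\delta_-=0$ and $\delta_+\Phi=0$ over $\mathbb F_2$, and the $\{2\}$ shift gap cancelling the $q$-degree drop of the genus-$1$ cobordism — is just the explicit content of that identification, so there is nothing to add.
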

\begin{proof}
Note that we have canonical identifications
\[
\operatorname{Cone}(\delta_-)
= \overbar C\left(\diagCrossNegUp{}\right)[-1]
\ ,\quad
\operatorname{Cone}(\delta_+)
= \overbar C\left(\diagCrossPosUp{}\right)[-1]
\ .
\]
Hence, in view of \cref{lem:cone=cofib} and \cref{rem:cone=fib}, \cref{prop:delta-Phi} yields a chain map $\widehat\Phi$ as required.
As for its bidegree, the result follows from the direct computation of the numbers of positive and negative crossings on the two diagrams.
\end{proof}

In what follows, the map $\widehat\Phi$ is referred to as the \emph{genus-$1$} map.
In terms of enhanced states (see \cref{sec:quick-review:enhst}), we have an explicit description.
Namely, it is given by the following local formula:
\[
\begin{array}{cccccc}
\tikz[baseline=-.5ex]{%
  \node at (0,0) {\arrayHCrossNeg};
  \draw[green,ultra thick] (-.5,0) -- node[above,black] {$p$} (.5,0);
} &
  \xmapsto{\widehat\Phi} &
  0\quad,\quad&
\tikz[baseline=-.5ex]{%
  \node at (0,0) {\arrayHCrossNeg};
  \draw[blue,ultra thick] (0,.5) -- (0,-.5);
  \node at (-.5,0) {$p$};
  \node at (.5,0) {$q$};
} &
  \xmapsto{\widehat\Phi}&
\tikz[baseline=-.5ex]{%
  \node at (0,0) {\arrayHCrossPos};
  \draw[green,ultra thick] (0,.5) -- (0,-.5);
  \node at (-1,0) {$(pq)^{(1)}$};
  \node at (1,0) {$(pq)^{(2)}$};
}
\\
\tikz[baseline=-.5ex]{%
  \node at (0,0) {\arrayVCrossPos};
  \draw[green,ultra thick] (-.5,0) -- (.5,0);
  \node at (0,.5) {$p$};
  \node at (0,-.5) {$q$};
} &
  \mapsto&
  0\quad,\quad &
\tikz[baseline=-.5ex]{%
  \node at (0,0) {\arrayVCrossNeg};
  \draw[blue,ultra thick] (0,.5) -- node[right,black] {$p$} (0,-.5);
} &
  \mapsto &
\tikz[baseline=-.5ex]{%
  \node at (0,0) {\arrayVCrossNeg};
  \draw[green,ultra thick] (0,.5) -- node[right,black] {$p^{(1)}p^{(2)}$} (0,-.5);
}
\end{array}
\quad,
\]
here we use \emph{Sweedler notation} for comultiplications in the Frobenius algebra structure.

We end the section by mentioning the relation to Vassiliev skein relation.
Since the genus-$1$ map $\widehat\Phi$ is of bidegree $(0,0)$, by virtue of \cref{lem:cofib-longex}, we obtain an equation
\[
\left[ H^\ast(\operatorname{Cone}(\widehat\Phi))\right]_q
= \left[\mathit{Kh}\left(\diagCrossPosUp{}\right)\right]_q
- \left[\mathit{Kh}\left(\diagCrossNegUp{}\right)\right]_q
\quad.
\]
This is why we regard $\widehat\Phi$ as a chain-level realization of \emph{crossing change} in Vassiliev skein relation.
Actually, in this point of view, we extend Khovanov homology to singular links in \cref{sec:KHsing}.

\section{Khovanov homology of singular knots and links}
\label{sec:KHsing}

In the previous sections, we saw Vassiliev skein relation is categorified as the mapping cone of the genus-$1$ map $\widehat\Phi$.
The goal of the section is to show this actually gives rise to an extension of Khovanov homology to singular links.

\subsection{Multiple mapping cone of cubes}
\label{sec:KHsing:mult-cone}

To give an explicit description for Khovanov homology on singular knots, we use the theory of \emph{multiple mapping cone}, which was first developed in \cite{BrownEllis1988} as the dual of the results in \cite{Steiner1986}.

We concentrate on the case of chain complexes though the original one was discussed in the case of topological spaces.
More precisely, throughout the section, we fix an abelian category $\mathcal A$ and discuss the category $\mathbf{Ch}(\mathcal A)$ of chain complexes in $\mathcal A$.
In the later section, we take $\mathcal A$ to be the category of graded modules to apply the theory to Khovanov homology.

\begin{definition}
\label{def:n-cube-cplx}
For a non-negative integer $n$, an \emph{$n$-cube of complexes} in $\mathcal A$ is a family $X=\{X_A\}_A$ indexed by subsets $A\subset\langle n\rangle$ together with chain maps $\varphi_s:X_A\to X_{A\cup\{s\}}$ for $s\in\langle n\rangle\setminus A$ such that the square below commutes for each $s,t\in\langle n\rangle\setminus A$ with $s\neq t$:
\[
\begin{tikzcd}
X_A \ar[r,"\varphi_s"] \ar[d,"\varphi_t"] & X_{A\cup \{s\}} \ar[d,"\varphi_t"] \\
X_{A\cup\{t\}} \ar[r,"\varphi_s"] & X_{A\cup\{s,t\}}
\end{tikzcd}
\quad.
\]
\end{definition}

\begin{definition}
\label{def:n-cube-cofib}
An \emph{$n$-cube of cofiber sequences} in $\mathbf{Ch}(\mathcal A)$ consists of
\begin{itemize}
  \item a family $Y=\{Y_{B,A}\}_{(B,A)}$ of chain complexes in $\mathcal A$ indexed by pairs of disjoint subsets $A\amalg B\subset\langle n\rangle$;
  \item a cofiber sequence
\[
Y_{B,A}
\xrightarrow{\varphi_i} Y_{B,A\cup\{i\}}
\xrightarrow{\psi_i} Y_{B\cup\{i\},A}
\]
for each $i\in\langle n\rangle\setminus (A\amalg B)$;
\end{itemize}
such that the diagram below commutes
\[
\begin{tikzcd}
Y_{B,A} \ar[r,"\varphi_i"] \ar[d,"\varphi_j"] & Y_{B,A\cup\{i\}} \ar[r,"\psi_i"] \ar[d,"\varphi_j"] & Y_{B\cup\{i\},A} \ar[d,"\varphi_j"] \\
Y_{B,A\cup\{j\}} \ar[r,"\varphi_i"] \ar[d,"\psi_j"] & Y_{B,A\cup\{i,j\}} \ar[r,"\psi_i"] \ar[d,"\psi_j"] & Y_{B\cup\{i\},A\cup\{j\}} \ar[d,"\psi_j"] \\
Y_{B\cup\{j\},A} \ar[r,"\varphi_i"] & Y_{B\cup\{j\},A\cup\{i\}} \ar[r,"\psi_i"] & Y_{B\cup\{i,j\},A}
\end{tikzcd}
\]
for each distinct elements $i,j\in\langle n\rangle\setminus (A\amalg B)$.
\end{definition}

Note that if $Y=\{Y_{B,A}\}_{(B,A)}$ is an $n$-cube of cofiber sequences in $\mathbf{Ch}(\mathcal A)$, the subfamily $\underline Y\coloneqq\{Y_{\varnothing,A}\}_A$ forms an $n$-cube of complexes.
We are interested in the inverse direction: we want to extend an $n$-cube of complexes to an $n$-cube of cofiber sequences.

Given an $n$-cube $X$ of complexes, we define a chain complex $\widetilde X_{B,A}$ so that
\begin{itemize}
  \item the underlying graded object is given by
\[
\widetilde X_{B,A}
\coloneqq \bigoplus_{S\subset B} X_{A\cup S}\bigl[-|B\setminus S|\bigr]
\ ,
\]
where $|\blank|$ denotes the cardinality;
  \item the differential $d^i_{B,A}:\widetilde X^i_{B,A}\to \widetilde X^{i+1}_{B,A}$ is componentwisely given as
\[
d_{B,A}^i:
X_{A\cup S}\bigl[-|B\setminus S|\bigr]^i
=X_{A\cup S}^{i+|B\setminus S|}
\xrightarrow{d_{B,A}^{i;S,T}} X_{A\cup T}^{i+|B\setminus T|+1}
=X_{A\cup T}\bigl[-|B\setminus T|\bigr]^{i+1}
\]
with
\[
d_{B,A}^{i;S,T} =
\begin{cases}
(-1)^{|B\setminus S|}d_{A\cup S}^{i+|B\setminus S|} &\quad \text{if $S=T$}\ , \\[1.5ex]
(-1)^{\nu(t;S)}\varphi_t^{i+|B\setminus S|} &\quad \text{if $T=S\cup\{t\}$ with $t\notin S$}\ ,\\[1.5ex]
0 &\quad \text{otherwise}\ ,
\end{cases}
\]
here $\nu(t;S)\coloneqq\#\{s\in S\mid s>t\}$.
\end{itemize}
It is easily checked that $(\widetilde X_{B,A},d_{B,A})$ is a chain complex in $\mathcal A$; in particular $d_{B,A}\circ d_{B,A}=0$.
Furthermore, for each $s\in\langle n\rangle\setminus(A\amalg B)$, we have a canonical decomposition
\begin{multline}
\label{eq:multcone-decomp}
\widetilde X_{B\cup\{s\},A}
= \left(\bigoplus_{S\subset B} X_{A\cup\{s\}\cup S}\bigl[-|B\setminus S|\bigr]\right)
\oplus \left(\bigoplus_{S\subset B} X_{A\cup S}\bigl[-|B\setminus S|\bigr]\right)[-1] \\
= \widetilde X_{B,A\cup\{s\}}\oplus \widetilde X_{B,A}[-1]
\mathrlap{\quad.}
\end{multline}
With regard to the decomposition, the differential $d_{B\cup\{s\},A}$ on $\widetilde X_{B\cup\{s\},A}$ is presented by the following matrix:
\begin{equation}
\label{eq:cube-diff-mat}
d^i_{B\cup\{s\},A} =
\begin{pmatrix}
d^i_{B,A\cup\{s\}} & \widetilde\varphi_s^{i+1} \\[1.5ex]
0 & -d^{i+1}_{B,A}
\end{pmatrix}
\ ,
\end{equation}
here $\widetilde\varphi_s^i:\widetilde X_{B,A}^i\to\widetilde X_{B,A\cup\{s\}}$ is the sum of the chain maps
\[
\begin{multlined}
X_{A\cup S}\bigl[-|B\setminus S|\bigr]^i
= X_{A\cup S}^{i+|B\setminus S|} \\
\xrightarrow{(-1)^{\nu(s;S)}\varphi_s^{i+|B\setminus S|}} X_{A\cup\{s\}\cup S}^{i+|B\setminus S|}
= X_{A\cup\{s\}\cup S}\bigl[-|B\setminus S|\bigr]^i
\quad.
\end{multlined}
\]
On the other hand, we write $\widetilde\psi_s:\widetilde X_{B,A\cup\{s\}}\to\widetilde X_{B\cup\{s\},A}$ the canonical inclusion respecting the decomposition \eqref{eq:multcone-decomp}.
Then, the following result is obvious.

\begin{lemma}
\label{lem:multcone-cube}
In the above situation, the family $\{\widetilde X_{B,A}\}_{(B,A)}$ together with maps $\{\widetilde\varphi_s\}_s$ and $\{\widetilde\psi_s\}_s$ forms an $n$-cube of cofiber sequences.
\end{lemma}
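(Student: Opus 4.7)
The plan is to verify the two defining conditions of \cref{def:n-cube-cofib} for the family $\{\widetilde X_{B,A}\}_{(B,A)}$: first, that each triple
\[
\widetilde X_{B,A}\xrightarrow{\widetilde\varphi_i}\widetilde X_{B,A\cup\{i\}}\xrightarrow{\widetilde\psi_i}\widetilde X_{B\cup\{i\},A}
\]
is a cofiber sequence, and second, that the nine-square diagram commutes. The cofiber condition is essentially built into the construction: the decomposition \eqref{eq:multcone-decomp} together with the matrix form \eqref{eq:cube-diff-mat} of the differential identifies $\widetilde X_{B\cup\{i\},A}$ with the mapping cone $\operatorname{Cone}(\widetilde\varphi_i)$, while $\widetilde\psi_i$ becomes the canonical inclusion $i_Y$ of the short exact sequence of the cone discussed in \cref{sec:cone}. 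The chain homotopy $\Psi(x)=(0,x)$ witnessing $\widetilde\psi_i\widetilde\varphi_i\sim 0$ is the one built into the cone, and the induced comparison $\operatorname{Cone}(\widetilde\varphi_i)\to\widetilde X_{B\cup\{i\},A}$ is literally the identity, so this condition holds by construction.

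For the nine-square commutativity I would expand every complex via $\widetilde X_{B,A}=\bigoplus_{S\subset B}X_{A\cup S}[-|B\setminus S|]$ and verify each of the four subsquares summand-by-summand. The maps act componentwise: $\widetilde\varphi_s$ is $(-1)^{\nu(s;S)}\varphi_s$ on the $S$-summand, and each $\widetilde\psi_s$ is a canonical summand inclusion respecting \eqref{eq:multcone-decomp}. The pure $\widetilde\varphi\widetilde\varphi$-square reduces to the original cube relation $\varphi_i\varphi_j=\varphi_j\varphi_i$ in $X$ together with the obvious symmetry of $\nu(i;S)+\nu(j;S)$ under swapping $i,j$; the pure $\widetilde\psi\widetilde\psi$-square reduces to associativity of direct-sum inclusions; and the two mixed squares follow because $\widetilde\varphi_j$ is a direct sum of its components while $\widetilde\psi_i$ is a summand inclusion, so the two operations act on disjoint directions of the decomposition.

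The main obstacle I anticipate is the sign bookkeeping: when the indexing set passes from $B$ to $B\cup\{i\}$ or $B\cup\{j\}$, the sign $(-1)^{\nu(s;S')}$ attached to a given cross term could a priori jump, and the shift signs $(-1)^{|B\setminus S|}$ together with the $-d_{B,A}$ appearing in \eqref{eq:cube-diff-mat} must be handled carefully so that both routes around each square produce matching signs. However, $\nu(s;S)$ depends only on elements of $S$ strictly greater than $s$, and in each summand relevant to one of the four squares neither $i$ nor $j$ sits inside $S$; combined with the commutativity of the corresponding squares in the original cube $X$, this ensures that both routes around each square produce identical signs and the diagram commutes strictly on the nose.
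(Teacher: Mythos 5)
The paper offers no proof here (it declares the result ``obvious''), so your task was to fill in the argument. The first half of your proposal is sound: \eqref{eq:multcone-decomp} and \eqref{eq:cube-diff-mat} do present $\widetilde X_{B\cup\{s\},A}$ as a mapping cone of $\widetilde\varphi_s$ with $\widetilde\psi_s$ the canonical inclusion, which gives the cofiber-sequence condition essentially for free.

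The nine-square argument, however, contains a genuine gap, precisely at the step you tried to wave away. Your claim that ``in each summand relevant to one of the four squares neither $i$ nor $j$ sits inside $S$'' is false for the two mixed squares. The map $\widetilde\psi_i$ sends the $S$-indexed summand of $\widetilde X_{B,A\cup\{i\}}$ to the $(S\cup\{i\})$-indexed summand of $\widetilde X_{B\cup\{i\},A}$, so the subsequent application of $\widetilde\varphi_j$ carries the sign $(-1)^{\nu(j;S\cup\{i\})}=(-1)^{\nu(j;S)}\cdot(-1)^{[i>j]}$, whereas the other route $\widetilde\psi_i\circ\widetilde\varphi_j$ carries $(-1)^{\nu(j;S)}$. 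Thus the top-right square picks up an extra sign when $i>j$, and symmetrically the bottom-left square when $j>i$: for every unordered pair $\{i,j\}$, exactly one of the two mixed squares anti-commutes under the paper's stated conventions. (The same subtlety is already visible in \eqref{eq:cube-diff-mat}: the upper-left block equals $d_{B,A\cup\{s\}}$ only when $s<\min B$, since the $\varphi_t$ entries carry $(-1)^{\nu(t;S\cup\{s\})}$ rather than $(-1)^{\nu(t;S)}$.) You should therefore not assert commutativity ``strictly on the nose'' in a general abelian category. In the paper's actual application $\mathcal A=(\mathbf{Vect}_{\mathbb F_2})^{\mathbb Z}$, where signs are invisible, so the lemma holds as used; over a general $\mathcal A$ one must either insert compensating signs into $\widetilde\psi_s$ (adjusting the bookkeeping in \eqref{eq:cube-diff-mat} accordingly), or weaken \cref{def:n-cube-cofib} to commutativity up to sign, which does not change the quasi-isomorphism type of $\operatorname{MCone}(X)$ (cf.\ \cref{rem:multcone-orderindep}). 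Your proof should flag this explicitly rather than conclude the signs work out on the nose.
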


Since we have the canonical identification
\[
\widetilde X_{\varnothing,A}=X_A
\]
as complexes, it follows that every cube of complexes extends to that of cofiber sequences.

\begin{definition}
\label{def:mult-mcone}
Let $X$ be an $n$-cube of complexes in $\mathcal A$.
Then, the \emph{multiple mapping cone} of $X$, written $\operatorname{MCone}(X)$, is the complex $\widetilde X_{\langle n\rangle,\varnothing}$ where $\widetilde X$ is the $n$-cube constructed above.
\end{definition}

\begin{remark}
\label{rem:multcone-orderindep}
For an $n$-cube $X$ of complexes, there may be several choices of $n$-cubes of cofiber sequences extending $X$.
It however turns out that the homotopy type of the multiple mapping cone $\operatorname{MCone}(X)$ is completely determined by $X$ itself.
\end{remark}

\begin{example}
A $1$-cube $X$ of complexes is nothing but a chain map $\varphi:X_\varnothing\to X_{\langle 1\rangle}$.
In this case, the multiple mapping cone $\operatorname{MCone}(X)$ coincides with the ordinary mapping cone $\operatorname{Cone}(\varphi)$.
\end{example}

\begin{remark}
In his paper \cite{Khovanov2000}, Khovanov defined a complex for cubes of abelian groups.
Actually, his construction essentially agrees with the multiple mapping cone (up to shift of degrees).
\end{remark}

As in the case of ordinary mapping cones, for an $n$-cube $X$ of complexes, the homotopy type of $\operatorname{MCone}(X)$ only depends on that of $X$.
Indeed, a \emph{map of $n$-cubes} $\alpha:X\to Y$ is a family of chain maps $\alpha_A:X_A\to Y_A$ indexed by subsets $A\subset\langle n\rangle$ respecting the structure maps of $n$-cubes.
In particular, we call $\alpha$ a \emph{quasi-isomorphism of $n$-cubes} if each $\alpha_A$ is a quasi-isomorphism.
The following result is proved by recursive use of \cref{prop:hinv-cofib}.

\begin{proposition}
\label{prop:hinv-mcone}
If $\alpha:X\to Y$ is a map of $n$-cubes, then it induces a chain map $\alpha_\ast:\operatorname{MCone}(X)\to\operatorname{MCone}(Y)$ which makes the square below commute:
\[
\begin{tikzcd}
X_{\langle n\rangle} \ar[r] \ar[d,"\alpha_{\langle n\rangle}"'] & \operatorname{MCone}(X) \ar[d,"\alpha_\ast"] \\
Y_{\langle n\rangle} \ar[r] & \operatorname{MCone}(Y)
\end{tikzcd}
\quad.
\]
Moreover, $\alpha_\ast$ is a quasi-isomorphism as soon as so is $\alpha$.
\end{proposition}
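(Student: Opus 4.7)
The plan is to prove the proposition by induction on $n$. The base case $n=0$ is immediate since $\operatorname{MCone}(X)=X_\varnothing$ and $\alpha_\ast=\alpha_\varnothing$. The inductive step rests on observing that $\operatorname{MCone}$ may be built coordinate-by-coordinate as an iterated ordinary mapping cone. Given an $n$-cube $X$ with $n\geq 1$, I would isolate the coordinate $n$ and form two $(n-1)$-cubes $X^{(0)}$ and $X^{(1)}$ indexed by $A\subset\langle n-1\rangle$, defined by $X^{(0)}_A:=X_A$ and $X^{(1)}_A:=X_{A\cup\{n\}}$ with structure maps inherited from $X$. The maps $\varphi_n:X_A\to X_{A\cup\{n\}}$ assemble into a map of $(n-1)$-cubes $\varphi_n^\bullet:X^{(0)}\to X^{(1)}$. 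The decomposition \eqref{eq:multcone-decomp} together with the matrix form \eqref{eq:cube-diff-mat}, applied with $s=n$, $B=\langle n-1\rangle$, $A=\varnothing$, identifies $\operatorname{MCone}(X)=\widetilde X_{\langle n\rangle,\varnothing}$ with the ordinary mapping cone of the chain map $\operatorname{MCone}(X^{(0)})\to\operatorname{MCone}(X^{(1)})$ produced from $\varphi_n^\bullet$ by the inductive hypothesis.

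Given a map $\alpha:X\to Y$ of $n$-cubes, the restrictions $\alpha^{(0)}:X^{(0)}\to Y^{(0)}$ and $\alpha^{(1)}:X^{(1)}\to Y^{(1)}$ are maps of $(n-1)$-cubes, and compatibility of $\alpha$ with $\varphi_n$ yields a commutative square of $(n-1)$-cubes intertwining $\varphi_n^\bullet$ for $X$ with that for $Y$. Applying the inductive hypothesis gives chain maps $\alpha^{(0)}_\ast$ and $\alpha^{(1)}_\ast$ on the corresponding multiple mapping cones fitting into a commutative square. Invoking \cref{prop:hinv-cofib} then produces the desired map $\alpha_\ast:\operatorname{MCone}(X)\to\operatorname{MCone}(Y)$, fitting into the commutative square with the canonical inclusions $X_{\langle n\rangle}\hookrightarrow\operatorname{MCone}(X)$ and $Y_{\langle n\rangle}\hookrightarrow\operatorname{MCone}(Y)$ (these are the summands corresponding to $S=\langle n\rangle$ in $\widetilde X_{\langle n\rangle,\varnothing}$, on which $\alpha_\ast$ restricts to $\alpha_{\langle n\rangle}$ by construction). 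When $\alpha$ is a quasi-isomorphism of $n$-cubes, so are $\alpha^{(0)}$ and $\alpha^{(1)}$ componentwise; by induction $\alpha^{(0)}_\ast$ and $\alpha^{(1)}_\ast$ are quasi-isomorphisms, and the final clause of \cref{prop:hinv-cofib} ensures the same for $\alpha_\ast$.

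The principal obstacle I foresee is the bookkeeping of signs needed to make the inductive identification rigorous. The differentials $d^{i;S,T}_{B,A}$ carry the factors $(-1)^{|B\setminus S|}$ and $(-1)^{\nu(t;S)}$, and matching \eqref{eq:cube-diff-mat} against the sign convention for the ordinary mapping cone of the inductively built map $\widetilde\varphi_n$ requires a careful manipulation of indices. A related subtlety is ensuring that the canonical null-homotopies of the ordinary cofiber sequences produced at each stage are compatible across the inductive step, so that the hypothesis $\Psi u=w\Psi'$ of \cref{prop:hinv-cofib} holds automatically; this should follow from naturality of the mapping cone construction in squares of chain maps, but is the kind of verification that must be performed once and for all before the induction can run smoothly.
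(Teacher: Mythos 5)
Your strategy — induction on $n$, peeling off the last coordinate and identifying $\operatorname{MCone}(X)$ with $\operatorname{Cone}\bigl(\widetilde\varphi_n:\operatorname{MCone}(X^{(0)})\to\operatorname{MCone}(X^{(1)})\bigr)$ via \eqref{eq:multcone-decomp} and \eqref{eq:cube-diff-mat} — is exactly what the paper has in mind; the paper's entire proof is the one-line remark that the result follows by recursive use of \cref{prop:hinv-cofib}, and your write-up is a reasonable expansion of that remark.

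There is, however, one gap worth repairing. In the inductive step you assert that $\alpha^{(0)}_\ast$, $\alpha^{(1)}_\ast$ and the two horizontal maps $\widetilde\varphi_n$ ``fit into a commutative square.'' But the inductive hypothesis, as you have stated it, only asserts that a map of $(n-1)$-cubes induces \emph{some} chain map on multiple mapping cones compatible with the inclusion of the top vertex. It says nothing about functoriality, yet commutativity of the square
\[
\begin{tikzcd}
\operatorname{MCone}(X^{(0)}) \ar[r,"\widetilde\varphi_n^X"] \ar[d,"\alpha^{(0)}_\ast"'] & \operatorname{MCone}(X^{(1)}) \ar[d,"\alpha^{(1)}_\ast"] \\
\operatorname{MCone}(Y^{(0)}) \ar[r,"\widetilde\varphi_n^Y"] & \operatorname{MCone}(Y^{(1)})
\end{tikzcd}
\]
is precisely a functoriality statement, since $\widetilde\varphi_n^X=(\varphi_n^{X,\bullet})_\ast$ and $\widetilde\varphi_n^Y=(\varphi_n^{Y,\bullet})_\ast$ are themselves induced maps of a commutative square of $(n-1)$-cubes. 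You must therefore either strengthen the inductive claim to ``$(\blank)_\ast$ is a functor on the category of $(n-1)$-cubes'' or, more economically, bypass the issue by defining $\alpha_\ast$ once and for all by the explicit componentwise formula
\[
\alpha_\ast
\coloneqq \bigoplus_{S\subset\langle n\rangle}\alpha_S\bigl[-|\langle n\rangle\setminus S|\bigr]
:\operatorname{MCone}(X)\to\operatorname{MCone}(Y)
\quad.
\]
That this is a chain map is a direct check: the signs $(-1)^{|B\setminus S|}$ and $(-1)^{\nu(t;S)}$ in $d^{i;S,T}_{B,A}$ depend only on the index sets $S,T$ and not on the cube, so they cancel on both sides once one uses $\alpha_{A\cup T}\circ\varphi_t=\varphi_t\circ\alpha_{A\cup S}$; this also disposes of the sign bookkeeping you flagged. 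With the explicit formula the commuting square with the $S=\langle n\rangle$ summand and the functoriality needed for the induction are both immediate, and your appeal to \cref{prop:hinv-cofib} then delivers the quasi-isomorphism clause without further ado. Alternatively, once the chain map is in hand, the quasi-isomorphism statement also follows directly by the five lemma applied to the long exact sequences of the two cone sequences, which slightly shortens the final step.
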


In \cref{sec:prop}, we are interested in vanishing of multiple mapping cone.
The following gives a simple criterion for it; the proof is easy so we leave it to the reader.

\begin{lemma}
\label{lem:cofibcube-vanish}
Let $Y=\{Y_{B,A}\}_{(B,A)}$ be an $n$-cube of cofiber sequences.
Suppose there is an element $s\in\langle n\rangle$ such that, for each $A\subset\langle n\rangle\setminus\{s\}$, the map $Y_{\varnothing,A}\to Y_{\varnothing,A\cup\{s\}}$ is a quasi-isomorphism.
Then, the complex $Y_{B,A}$ is null-homotopic provided $s\in B$.
\end{lemma}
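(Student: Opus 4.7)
The plan is to induct on $|B|$, peeling off directions from $B$ one at a time so as to reduce to the base case in which the hypothesis about $\varphi_s$ can be invoked directly.

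For the base case $|B|=1$, i.e.\ $B=\{s\}$, I would appeal to \cref{def:n-cube-cofib} to obtain, for each $A\subset\langle n\rangle\setminus\{s\}$, a cofiber sequence
\[
Y_{\varnothing,A}\xrightarrow{\varphi_s} Y_{\varnothing,A\cup\{s\}}\xrightarrow{\psi_s} Y_{\{s\},A}
\ .
\]
The hypothesis says $\varphi_s$ is a quasi-isomorphism, and, as recorded in the example following \cref{def:cofib-seq}, a cofiber of a quasi-isomorphism is acyclic; hence $Y_{\{s\},A}$ has trivial homology.

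For the inductive step, assume $|B|\geq 2$ and $s\in B$. Since $|B|\geq 2$, I can pick some $t\in B$ with $t\neq s$ and set $B'\coloneqq B\setminus\{t\}$, so that $s\in B'$ and $|B'|<|B|$. Applying \cref{def:n-cube-cofib} with the index $t$ produces a cofiber sequence
\[
Y_{B',A}\xrightarrow{\varphi_t} Y_{B',A\cup\{t\}}\xrightarrow{\psi_t} Y_{B,A}
\ .
\]
The inductive hypothesis, applied to $B'$ with both $A$ and $A\cup\{t\}$ (each disjoint from $B'$), tells me that the two left-hand terms are acyclic, and then the long exact sequence of \cref{lem:cofib-longex} will force $H^\ast(Y_{B,A})=0$.

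The main obstacle I anticipate is a minor terminological one: ``null-homotopic'' a priori means chain-homotopy equivalent to zero, which is strictly stronger than ``acyclic'' in an arbitrary abelian category $\mathcal A$. In the intended application, however, $\mathcal A$ will be a category of (graded) $\mathbb F_2$-vector spaces, in which every acyclic complex is chain-contractible, so the two notions coincide and the induction above will conclude the lemma as stated. Up to this point the argument is genuinely easy, matching the authors' remark.
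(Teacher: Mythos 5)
Your proof is correct, and since the paper explicitly leaves the argument to the reader there is no written proof to compare against; the induction on $|B|$, peeling off one $t\in B\setminus\{s\}$ at a time via the cofiber sequence in the $t$-direction and invoking \cref{lem:cofib-longex}, is the natural argument and the base case correctly uses the example after \cref{def:cofib-seq} (cofibers of quasi-isomorphisms are acyclic). Your terminological caveat is also well-placed: the argument proves \emph{acyclicity}, and in a general abelian category that is weaker than ``null-homotopic'' as literally stated; but in the paper's application $\mathcal A=(\mathbf{Vect}_{\mathbb F_2})^{\mathbb Z}$, where acyclic complexes of vector spaces are automatically contractible, so the statement holds as written. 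One could sharpen the lemma to assert only acyclicity in general, which is all that is needed downstream (\cref{cor:multcube-vanish} and \cref{main:KH-FIrel} only use the vanishing of homology), but as you observe this is a cosmetic point rather than a gap.
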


\begin{corollary}
\label{cor:multcube-vanish}
Let $X=\{X_A\}_A$ be an $n$-cube of complexes.
Suppose there is an element $s\in\langle n\rangle$ such that the map $X_A\to X_{A\cup\{s\}}$ is a quasi-isomorphism for every subset $A\subset\langle n\rangle\setminus\{s\}$.
Then, the multiple mapping cone $\operatorname{MCone}(X)$ is null-homotopic.
\end{corollary}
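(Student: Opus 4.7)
The plan is to deduce this immediately from \cref{lem:cofibcube-vanish} by realising $\operatorname{MCone}(X)$ as a single entry of the $n$-cube of cofiber sequences supplied by \cref{lem:multcone-cube}. First, I would form the cube $\widetilde X=\{\widetilde X_{B,A}\}_{(B,A)}$ extending $X$. By the canonical identification $\widetilde X_{\varnothing,A}=X_A$, the underlying cube of complexes $\underline{\widetilde X}$ agrees with $X$, so the hypothesis that $\varphi_s:X_A\to X_{A\cup\{s\}}$ is a quasi-isomorphism for every $A\subset\langle n\rangle\setminus\{s\}$ is exactly the hypothesis of \cref{lem:cofibcube-vanish} applied to $\widetilde X$. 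Since $\operatorname{MCone}(X)=\widetilde X_{\langle n\rangle,\varnothing}$ by \cref{def:mult-mcone}, and $s\in\langle n\rangle$, \cref{lem:cofibcube-vanish} immediately yields that $\operatorname{MCone}(X)$ is null-homotopic.

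There is essentially no obstacle here; the content has been pushed into the preceding lemma. If one wanted to verify \cref{lem:cofibcube-vanish} itself for completeness, the natural approach would be induction on $|B|$: when $s\in B$, write $B=B'\cup\{s\}$ and use the cofiber sequence
\[
\widetilde X_{B',A}\xrightarrow{\widetilde\varphi_s} \widetilde X_{B',A\cup\{s\}}\xrightarrow{\widetilde\psi_s} \widetilde X_{B,A}\ ,
\]
observing that $\widetilde\varphi_s$ is built (via the matrix in \eqref{eq:cube-diff-mat} and an evident filtration argument) from the componentwise quasi-isomorphisms $\varphi_s:X_{A\cup S}\to X_{A\cup\{s\}\cup S}$, so it is itself a quasi-isomorphism; then the long exact sequence of \cref{lem:cofib-longex} forces $H^\ast(\widetilde X_{B,A})=0$.

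Thus, granting \cref{lem:cofibcube-vanish}, the proof of the corollary is a one-line invocation, and no further calculation is needed.
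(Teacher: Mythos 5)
Your proof is correct and is exactly the argument the paper intends: extend $X$ to the cube of cofiber sequences $\widetilde X$ via \cref{lem:multcone-cube}, note $\widetilde X_{\varnothing,A}=X_A$ so the hypotheses of \cref{lem:cofibcube-vanish} transfer verbatim, and read off the conclusion for $\widetilde X_{\langle n\rangle,\varnothing}=\operatorname{MCone}(X)$ since $s\in\langle n\rangle$. Your supplementary sketch of \cref{lem:cofibcube-vanish} (showing $\widetilde\varphi_s$ is a quasi-isomorphism via the filtration visible in \eqref{eq:cube-diff-mat} and then using the long exact sequence of \cref{lem:cofib-longex}) is also a reasonable route, though it verifies the lemma only for the specific cube $\widetilde X$ rather than for a general cube of cofiber sequences; for the corollary this specialization is all you need.
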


\subsection{Khovanov homology of singular links}
\label{sec:KHsing:KHsing}

We apply the results in \cref{sec:KHsing:mult-cone} to build up Khovanov homology of singular links from the mapping cone of the genus-$1$ map $\widehat\Phi$.
In the application, we take $\mathcal A=(\mathbf{Vect}_{\mathbb F_2})^{\mathbb Z}$ the category of $\mathbb Z$-graded $\mathbb F_2$-vector spaces, which is abelian.

Suppose $D$ is a diagram of a singular link $L$, and put $c^\sharp(D)$ the set of double points in $D$.
For each subset $A\subset c^\sharp(D)$, put $D_A$ the diagram obtained from $D$ so that
\begin{itemize}
  \item double points which belong to $A$ are resolved to positive crossings;
  \item double points outside of $A$ are resolved to negative crossings:
\end{itemize}
\[
\diagCrossNegUp{b^-}
\;\xleftarrow{b\notin A}\; \diagCrossSingUp{b}
\;\xrightarrow{b\in A}\; \diagCrossPosUp{b^+}
\quad.
\]
For each $b\in c^\sharp(D)\setminus A$, applying the genus-$1$ map $\widehat\Phi$ on the resolutions of $b$, we obtain a chain map
\[
\widehat\Phi_b:C(D_A)\otimes\mathbb F_2 \to C(D_{A\cup \{b\}})\otimes\mathbb F_2
\]
between Khovanov chain complexes over $\mathbb F_2$.
Note that, for distinct double points $b$ and $b'$, the maps $\widehat\Phi_b$ and $\widehat\Phi_{b'}$ commute with each other.
Hence, fixing a bijection $c^\sharp(D)\cong\{1,\dots,r\}$, we obtain an $r$-cube $\{C(D_A)\otimes\mathbb F_2\}_A$ of complexes in $(\mathbf{Vect}_{\mathbb F_2})^{\mathbb Z}$ with structure maps $\{\widehat\Phi_b\}_b$.

\begin{definition}
\label{def:Kh-sing}
Let $D$ be a diagram of a singular link.
We set
\[
\begin{gathered}
C(D;\mathbb F_2)
\coloneqq \operatorname{MCone}(\{C(D_A)\otimes\mathbb F_2\}_A,\{\widehat\Phi_b\}_b)
\\
\mathit{Kh}^{i,j}(D;\mathbb F_2)
\coloneqq H^i\left(C^{\ast,j}(D;\mathbb F_2)\right)
\end{gathered}
\]
and call them the \emph{Khovanov complex} and the \emph{Khovanov homology group} of $D$ respectively.
\end{definition}

\begin{remark}
By virtue of the results obtained in \cref{sec:invariance} and \cref{rem:multcone-orderindep}, the Khovanov homology groups $\mathit{Kh}^{i,j}(D;\mathbb F_2)$ depends neither on the choices of a diagram $D$ nor on a bijection $c^\sharp(D)\cong\{1,\dots,r\}$.
As a result, for a singular link $L$, then it is safe to write
\[
\mathit{Kh}^{i,j}(L;\mathbb F_2)
\coloneqq \mathit{Kh}^{i,j}(D;\mathbb F_2)
\]
for an arbitrary diagram $D$ of $L$.
\end{remark}

The following is obvious.

\begin{proposition}
\label{prop:KH-vs-KHsing}
In the case of non-singular links, the definition above agrees with the original one with coefficients in $\mathbb F_2$.
\end{proposition}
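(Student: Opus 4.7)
The plan is to unpack the construction in \cref{def:Kh-sing} in the case where the diagram $D$ has no double points, and observe that it collapses to the ordinary Khovanov complex tensored with $\mathbb F_2$.

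Concretely, I would first note that when $L$ is a non-singular link, any diagram $D$ has $c^\sharp(D)=\varnothing$, so the indexing parameter $r=|c^\sharp(D)|$ equals $0$. Thus the cube of complexes $\{C(D_A)\otimes\mathbb F_2\}_A$ is a $0$-cube: its index set is the singleton $\{\varnothing\}$, and the only entry is $C(D_\varnothing)\otimes\mathbb F_2$. Since there are no double points to resolve, $D_\varnothing = D$, so this entry is $C(D)\otimes\mathbb F_2$, and there are no structure maps $\widehat\Phi_b$ to record.

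Next, I would apply \cref{def:mult-mcone} with $n=0$. The construction gives $\operatorname{MCone}(X) = \widetilde X_{\langle 0\rangle,\varnothing} = \widetilde X_{\varnothing,\varnothing}$, and by the general formula
\[
\widetilde X_{\varnothing,\varnothing}
= \bigoplus_{S\subset\varnothing} X_{S}\bigl[-|\varnothing\setminus S|\bigr]
= X_\varnothing
\]
with the differential inherited from $X_\varnothing$ (the off-diagonal terms in the matrix \eqref{eq:cube-diff-mat} do not occur). Hence $\operatorname{MCone}(\{C(D_A)\otimes\mathbb F_2\}_A) = C(D)\otimes\mathbb F_2$ as chain complexes, and passing to homology gives $\mathit{Kh}^{i,j}(D;\mathbb F_2) = H^i(C^{\ast,j}(D)\otimes\mathbb F_2)$, which is precisely the original definition recorded after \cref{theo:Khovanov}.

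There is essentially no obstacle here; the statement is a direct bookkeeping check that the multiple mapping cone construction is normalized to be the identity on $0$-cubes. The only minor point to flag is consistency of conventions (the identification $\widetilde X_{\varnothing,\varnothing} = X_\varnothing$ should be the one implicit in the remark following \cref{lem:multcone-cube}), but this is built into \cref{def:mult-mcone} and requires no further argument.
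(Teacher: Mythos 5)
Your proof is correct and simply spells out the bookkeeping that the paper treats as obvious (the paper gives no proof, remarking only that the statement is immediate). You correctly identify that a $0$-cube has $\operatorname{MCone}(X)=\widetilde X_{\varnothing,\varnothing}=X_\varnothing$ by the defining formula, which is exactly the normalization needed.
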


\section{Invariance}
\label{sec:invariance}

In this section, we prove \cref{main:KHinv} that  asserts the invariance of $Kh(\blank;\mathbb{F}_2)$ of singular links.  
We note that, according to \cite{BatainehElhamdadiHajijYoumans2018}, the moves of singular link diagrams are generated by the following three moves in addition to Reidemeister moves:
\begin{equation}
\label{eq:moveSing}
\begin{gathered}
\diagCrossSingRivOL
\;\leftrightarrow\; \diagCrossSingRivOR
\quad,\qquad
\diagCrossSingRivUL
\;\leftrightarrow\; \diagCrossSingRivUR
\quad,\\[1.5ex]
\diagRvSingL
\;\leftrightarrow\; \diagRvSingR
\quad.
\end{gathered}
\end{equation}
Thus, it will suffice to show that we have isomorphisms on Khovanov homologies along these moves.

The proof is essentially due to the following result.
For explicit description for chain maps associated to Reidemeister moves, we refer the reader to \cite[Page~337]{Viro2004}, \cite[Section~4.3]{BarNatan2005}, and \cite[Section~2]{Ito2011}.

\begin{proposition}
\label{prop:invO45}
With regard to Khovanov complex for ordinary link diagrams, the following hold:
\begin{enumerate}[label=\upshape(\arabic*)]
  \item\label{sub:invO45:O4} There are zig-zags of quasi-isomorphisms connecting the following pairs of chain maps:
\begin{gather}
\label{eq:invO45:O4a}
\begin{tikzcd}[ampersand replacement=\&]
C\left(\tikz[baseline=-.5ex]{\nodeTikzBox{diagCrossNegRivOL}; \node at (.5,0) {$b_-$};}\right) \otimes \mathbb{F}_2 \ar[r,dashed,<->,"\sim"] \ar[d,"\widehat\Phi_b"'] \& C\left(\tikz[baseline=-.5ex]{\nodeTikzBox{diagCrossNegRivOR}; \node at (-.5,0) {$b'_-$};}\right) \otimes \mathbb{F}_2 \ar[d,"\widehat\Phi_{b'}"]\\
C\left(\tikz[baseline=-.5ex]{\nodeTikzBox{diagCrossPosRivOL}; \node at (.5,0){$b_+$};}\right) \otimes \mathbb{F}_2 \ar[r,dashed,<->,"\sim"] \& C\left(\tikz[baseline=-.5ex]{\nodeTikzBox{diagCrossPosRivOR}; \node at (-.5,0) {$b'_+$};}\right) \otimes \mathbb{F}_2
\end{tikzcd}
\quad,\\
\label{eq:invO45:O4e}
\begin{tikzcd}[ampersand replacement=\&]
C\left(\tikz[baseline=-.5ex]{\nodeTikzBox{diagCrossNegRivUL}; \node at (.5,0) {$b_-$};}\right) \otimes \mathbb{F}_2 \ar[r,dashed,<->,"\sim"] \ar[d,"\widehat\Phi_b"'] \& C\left(\tikz[baseline=-.5ex]{\nodeTikzBox{diagCrossNegRivUR}; \node at (-.5,0) {$b'_-$};}\right) \otimes \mathbb{F}_2 \ar[d,"\widehat\Phi_{b'}"]\\
C\left(\tikz[baseline=-.5ex]{\nodeTikzBox{diagCrossPosRivUL}; \node at (.5,0){$b_+$};}\right) \otimes \mathbb{F}_2 \ar[r,dashed,<->,"\sim"] \& C\left(\tikz[baseline=-.5ex]{\nodeTikzBox{diagCrossPosRivUR}; \node at (-.5,0) {$b'_+$};}\right) \otimes \mathbb{F}_2
\end{tikzcd}
\quad,
\end{gather}
where $\widehat\Phi_\ast$ is the genus-$1$ map with respect to the crossing $\ast$;
  \item\label{sub:invO45:O5} The following square is commutative:
\begin{equation}
\label{eq:invR45:O5a}
\begin{tikzcd}[column sep=1em]
C\left(\tikz{\nodeTikzBox{diagRvB}; \node at (-1,0) {$b_-$};}\right) \ar[d,"\widehat\Phi_b"'] & C\left(\diagSmoothRight\right) \ar[l,"\overbar{\mathrm{RII}}"'] \ar[r,"\overbar{\mathrm{RII}}"] & C\left(\tikz{\nodeTikzBox{diagRvT}; \node at (1,0) {$b'_-$};}\right) \ar[d,"\widehat\Phi_{b'}"] \\
C\left(\tikz{\nodeTikzBox{diagRvFTw}; \node at (-1,0) {$b_+$};}\right) \ar[rr,equal] && C\left(\tikz{\nodeTikzBox{diagRvFTw}; \node at (1,0) {$b'_+$};}\right)
\end{tikzcd}
\end{equation}
\end{enumerate}
where $\overbar{\mathrm{RII}}$ is the chain map associated with Reidemeister move of type II.
\end{proposition}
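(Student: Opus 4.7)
The plan is to prove both parts by a locality argument: both the genus-$1$ map $\widehat\Phi_b$ and the chain equivalences realizing the Reidemeister-type moves in question are induced by cobordisms that are explicitly local to a small region of the diagram, and when the two regions are disjoint, the symmetric monoidal functoriality of the TQFT $Z_A$ forces the resulting chain maps to commute.

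For part (1), I would first recall from \cite[Section~4.3]{BarNatan2005} and \cite[Section~2]{Ito2011} an explicit chain equivalence---as a zig-zag of quasi-isomorphisms if no direct chain map is available---between the Khovanov complexes of the left and right diagrams in \eqref{eq:invO45:O4a} and \eqref{eq:invO45:O4e}. Each leg of the equivalence is induced by local cobordisms (typically a saddle composed with a birth or death) supported in the region where the strand is slid across the adjacent crossing; this region is disjoint from a small disk containing the crossing $b$. On the other hand, the genus-$1$ map $\widehat\Phi_b$ is built, via \cref{cor:Phi-hat}, from the composition of three saddles supported in a neighborhood of $b$. Because the two families of local cobordisms live in disjoint regions of the planar surface, their compositions agree on the nose by the monoidal functoriality of $Z_A$, and the argument applied leg by leg establishes compatibility with the entire zig-zag.

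For part (2), I would argue directly. The RII chain maps $\overbar{\mathrm{RII}}$ in \eqref{eq:invR45:O5a} insert a cancelling pair of crossings---one positive, one negative---in a local region. Composing with the genus-$1$ map $\widehat\Phi$ at the negative crossing replaces this local pair with a pair of positive crossings, yielding the common full-twist diagram in the bottom row of \eqref{eq:invR45:O5a}. The only difference between the two sides of the square is whether the inserted pair sits ``above'' or ``below'' in the diagram; but once both crossings have been changed to positive, the resulting diagrams literally coincide. Hence the composite cobordisms realized by the two compositions agree, and commutativity follows from the functoriality of $Z_A$.

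The main obstacle is the bookkeeping in part (1): one must verify that the explicit local chain maps from the cited sources really can be arranged into a zig-zag that commutes on the nose with $\widehat\Phi_b$, and the degree shifts $[-n_-]$ and $\{-2n_-+n_+\}$ introduced in \cref{note:shiftop} must be tracked carefully, since $n_\pm$ differ between the two sides of each square. Should any face fail to commute strictly at the chain level, one can fall back on computing directly on enhanced-state bases (\cref{lem:KH-basisEnh}) in the finitely many relevant local configurations to construct an explicit chain homotopy and then invoke \cref{prop:hinv-cofib} to transport the conclusion to mapping cones.
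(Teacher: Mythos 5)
Your locality argument for part~\ref{sub:invO45:O4} does not go through, for two independent reasons, and the fallback you mention is in fact the substance of the paper's argument rather than an emergency repair.

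First, the two local cobordisms are \emph{not} supported in disjoint regions. The moves in \eqref{eq:invO45:O4a} and \eqref{eq:invO45:O4e} are resolutions of the singular-link moves \eqref{eq:moveSing}, in which the strand being slid passes directly over or under the double point itself; the crossing $b$ (resp. $b'$) at which $\widehat\Phi$ acts sits in the middle of the tangle being rearranged, not off to the side. There is no small disk containing $b$ that is disjoint from the RIII-type modification, so monoidal functoriality of $Z_A$ gives you nothing for free. Second, $\widehat\Phi$ is not itself a cobordism-induced map: by \cref{cor:Phi-hat} it is obtained from the three-step sequence \eqref{eq:delta-Phi-delta} via the cone/fiber identifications of \cref{lem:cone=cofib} and \cref{rem:cone=fib}. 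To show that a Reidemeister equivalence lifts to a map of cones, you must check commutativity against each of the constituent maps $\delta_-$, $\Phi$, $\delta_+$ separately and then invoke \cref{prop:hinv-cofib}. This is exactly what \cref{lemma4e1} and \cref{lemma4e3} do, and the outcome is not a formality: \cref{lemma4e3} shows that $\Phi$ commutes with the RII chain maps only \emph{up to sign}. That sign obstruction is precisely why the entire construction is carried out over $\mathbb F_2$; a disjoint-support argument would have asserted strict commutativity over $\mathbb Z$ and thereby ``proved'' something false.

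Your treatment of part~\ref{sub:invO45:O5} has the same structural problem: you speak of ``the composite cobordisms realized by the two compositions,'' but since $\widehat\Phi_b$ is a cone map rather than a cobordism map, one must unwind its action explicitly, which the paper does by writing both composites on enhanced states via the explicit formulas for $\overbar{\mathrm{RII}}$ and the local description of $\widehat\Phi$. The conclusion is correct, but it has to be \emph{computed}, not inferred from ``the diagrams literally coincide.'' In short, the computation on enhanced-state bases that you relegate to a contingency is the proof; the locality argument that you lead with would, if taken at face value, overlook the sign that forces $\mathbb F_2$ coefficients.
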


The proof of the part \ref{sub:invO45:O4} in \cref{prop:invO45} relies on the invariance of Khovanov homology under Reidemeister move of type III.
We technically divide it into some lemmas; after preparing \cref{FrobeniusNotation}, we start the proofs.

\begin{notation}\label{FrobeniusNotation}
Recall that, for the TQFT $Z_A$ associated to the Frobenius algebra $A=\mathbb Z[x]/(x^2)$, each \emph{saddle point} on a cobordism causes either multiplication or comultiplication.
In order to describe them in a unified way, we use the symbol 
\[
\tikz[baseline=-.5ex]{%
  \nodeTikzBox{diagSmoothV};
  \node at (-.5,0) {$p$};
  \node at (.5,0) {$q$}; }
\;\xmapsto{m\ \text{or}\ \Delta}\;
\tikz[baseline=-.5]{%
  \nodeTikzBox{diagSmoothH};
  \node at (0,.7) {$p:q$};
  \node at (0,-.7) {$q:p$}; }
\]
to indicate the change of enhancements along a saddle operation (see \cref{sec:quick-review:enhst}).
For example, if $p=q$ are the label on the same component, then comultiplication occurs so that we have
\[
(p:q)\otimes (q:p) = p^{(1)}\otimes p^{(2)}
\]
using Sweedler notation.
\end{notation}

\begin{lemma}\label{lemma4e1}
The diagrams of chain maps below commute:
\begin{gather}
\begin{tikzcd}[ampersand replacement=\&]
C\left(\diagCrossHRivUL\right) \ar[rr,equal] \ar[d,"\delta_-"] \&\& C\left(\diagCrossHRivUR\right) \ar[d,"\delta_-"] \\
C\left(\diagCrossVRivUL\right)\left\{1\right\} \ar[r,"\mathrm{RII}"] \& C\left(\diagParallelVVV\right)\left\{1\right\} \& C\left(\diagCrossVRivUR\right)\left\{1\right\} \ar[l,"\mathrm{RII}"']
\end{tikzcd}
\quad,\\
\begin{tikzcd}[ampersand replacement=\&]
C\left(\diagCrossVRivUL\right) \ar[d,"\delta_+"] \& C\left(\diagParallelVVV\right) \ar[l,"\overline{\mathrm{RII}}"'] \ar[r,"\overline{\mathrm{RII}}"] \& C\left(\diagCrossVRivUR\right) \ar[d,"\delta_+"]
\\
C\left(\diagCrossHRivUL\right)\left\{1\right\} \ar[rr,equal] \&\& C\left(\diagCrossHRivUR\right)\left\{1\right\}
\end{tikzcd}
\quad,
\end{gather}
here $\delta_+$ and $\delta_-$ are the map which appear in the sequence \eqref{eq:delta-Phi-delta}.
\end{lemma}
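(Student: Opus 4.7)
The plan is to verify both squares by tracing the underlying cobordisms and exploiting the fact that the saddle realizing $\delta_\pm$ is supported in a disk disjoint from the region where the Reidemeister II move is performed. Since every map in the two diagrams is obtained by applying the TQFT functor $Z_A$ to an explicit cobordism in $\mathbf{Cob}_2$, the commutativity reduces to an isotopy rel boundary between composite surfaces.

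First, I would recall from \cite{BarNatan2005} and \cite[Section~2]{Ito2011} that, level by level in the cube of resolutions at the two R2-crossings, the chain map $\mathrm{RII}$ (and likewise $\overline{\mathrm{RII}}$) is realized by an elementary cobordism consisting of a single saddle near the R2-region together with a cap or a cup. For the top square, the composite $\mathrm{RII}\circ\delta_-$ going down the left side is the image under $Z_A$ of the composite cobordism built by first performing the $\delta_-$-saddle in a disk $U$ away from the R2-region, and then the RII-cobordism in a disk $V$ disjoint from $U$. The analogous composite for the right-hand side of the square uses the same two surfaces, with the R2-picture mirrored across the strand. Because the supports $U$ and $V$ are disjoint, the canonical identification of the starting complexes at the top of the square matches the two composite cobordisms up to an isotopy rel boundary. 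Functoriality of $Z_A$ then yields the required equality of chain maps.

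The bottom square follows by the same argument with $\delta_+$ in place of $\delta_-$ and $\overline{\mathrm{RII}}$ pointing in the opposite direction: the saddle realizing $\delta_+$ is supported in a disk disjoint from the RII-region, and disjoint supports translate into commutativity on the nose after applying $Z_A$.

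The main obstacle will be careful bookkeeping: the maps $\mathrm{RII}$ and $\overline{\mathrm{RII}}$ are not single cobordisms but sums over the resolutions of the two R2-crossings, so one must verify the disjoint-support argument summand by summand while tracking the sign conventions from the cube complex and the degree shifts $\{1\}$ sitting on the lower corners of each square. As a robust alternative, a generator-by-generator calculation with enhanced states using \cref{FrobeniusNotation} can be carried out: one checks that applying $\delta_\pm$ then $\mathrm{RII}$ (or $\overline{\mathrm{RII}}$) produces the same element as the reverse composition, which reduces the claim to a handful of elementary Frobenius-algebra identities in $A=\mathbb{Z}[x]/(x^2)$.
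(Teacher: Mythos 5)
Your proposal is correct, and it takes a genuinely different (and more self-contained) route than the paper. The paper's proof is a one-line deferral: it observes that the two commutativity claims are exactly the verifications needed to show that the Reidemeister III chain map in \cite{Ito2011} is in fact a chain map, and cites that work. You instead argue from first principles that each component of $\mathrm{RII}$ and $\overline{\mathrm{RII}}$ is a cobordism supported in a planar disk disjoint from the saddle realizing $\delta_\pm$, so functoriality of $Z_A$ gives the commutativity. This is the right conceptual explanation of \emph{why} the RIII computation works out, so the two arguments are really the same combinatorial check packaged differently; yours has the advantage of not outsourcing the substance, while the paper's is shorter because it reuses a known lemma. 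Two caveats worth keeping in mind: (i) $\mathrm{RII}$ and $\overline{\mathrm{RII}}$ are not purely cobordism maps but include delooping/retraction components, so the ``disjoint support'' argument must be run on each summand of the RII map separately, as you note; and (ii) the edge signs in the Khovanov cube are state-dependent, so ``commutativity on the nose'' is not automatic from the cobordism-level picture alone --- indeed the companion \cref{lemma4e3} only commutes up to a sign $(-1)^{i}$. The reason no such sign appears here is that $\delta_\pm$ shifts homological degree by one while $\Phi$ does not, so the bookkeeping is genuinely different from \cref{lemma4e3}; a brief check of the sign conventions (or the enhanced-states computation you propose as an alternative, which is exactly the style the paper uses for \cref{lemma4e3}) closes this gap.
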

\begin{proof}
One may notice that the statement is essentially the same as the proof that the isomorphism for the third Reidemeister move \cite{Ito2011} is a chain map.
\end{proof}

\begin{lemma}\label{lemma4e3}
For each pair $(i,j)$ of integers, the diagram below commutes up to sign:
\[
\begin{tikzcd}
C^{i,j}\left(\diagCrossVRivUL\right) \ar[d,"\Phi"] & C^{i,j}\left(\diagParallelVVV\right) \ar[l,"\overline{\mathrm{RII}}"'] \ar[r,"\overline{\mathrm{RII}}"] & C^{i,j}\left(\diagCrossVRivUR\right) \ar[d,"\Phi"]
\\
C^{i,j-2}\left(\diagCrossVRivUL\right) \ar[r,"\mathrm{RII}"] & C^{i,j-2}\left(\diagParallelVVV\right) & C^{i,j-2}\left(\diagCrossVRivUR\right) \ar[l,"\mathrm{RII}"']
\end{tikzcd}
\quad,
\]
here the map $\Phi$ is the one which appears in the sequence \eqref{eq:delta-Phi-delta}.
\end{lemma}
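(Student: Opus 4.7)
The plan is to compute both compositions on the enhanced-state basis of \cref{sec:quick-review:enhst} together with an explicit TQFT description of $\Phi$. Two inputs will drive the calculation. First, the Reidemeister II chain maps $\mathrm{RII}$ and $\overline{\mathrm{RII}}$ admit concrete formulas on enhanced states (see \cite[Section~4.3]{BarNatan2005}, \cite[Section~2]{Ito2011}) as cubical sums over the resolutions of the two RII crossings, each summand coming from a saddle cobordism. Second, since $\cobHoleVV$ is topologically the identity cylinder with an extra handle, the TQFT $Z_A$ sends it to the operator ``multiplication by $\eta\coloneqq \mu(\Delta(1)) = 2x$'' on the circle carrying the hole site; concretely, on enhanced states, $\Phi$ replaces $\rho(\alpha)$ by $2x\cdot \rho(\alpha)$ for the component $\alpha$ that contains the hole site, and acts as the identity on all other components.

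With these two descriptions in hand, I will trace a general basis element $v_{(s,\rho)}$ of $C^{i,j}(\diagParallelVVV)$ through both paths of the left half of the square, and carry out the analogous computation for the right half using $\diagCrossVRivUR$. The crucial observation is that the hole site lies on a strand disjoint from both of the RII crossings, so in every resolution of those crossings the hole sits on a circle that is not touched by the RII saddles. At the Frobenius-algebra level, this forces $\Phi$ to commute with every summand of $\overline{\mathrm{RII}}$ and $\mathrm{RII}$, because multiplication by $2x$ on one tensor factor commutes with any linear map acting only on the other tensor factors.

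The main obstacle is the bookkeeping of signs. Both $\overline{\mathrm{RII}}$ and $\mathrm{RII}$ are cubical composites whose signs are dictated by \eqref{eq:cube-diff-mat}, and one must verify that rearranging these with $\Phi$ introduces at most an overall sign---precisely the content of the ``up to sign'' conclusion. A conceptually cleaner route, in the spirit of the proof of \cref{lemma4e1}, would be to lift everything into Bar-Natan's cobordism category \cite{BarNatan2005} and establish the commutativity as a topological identity of formal cobordisms (modulo the $S$, $T$, and $4Tu$ relations), since the handle creating $\Phi$ is attached away from the locus of the RII move. This avoids pointwise sign tracking at the cost of invoking the full Bar-Natan formalism.
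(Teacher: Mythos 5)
Your plan to argue by explicit computation on enhanced states is in the same spirit as the paper's proof (the paper writes out the composite $\mathrm{RII}\circ\Phi\circ\overline{\mathrm{RII}}$ directly and then compares with the other composite by an up-down symmetry of the pictures). However, your argument rests on an incorrect description of $\Phi$, and this is a genuine gap.

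You claim that $Z_A$ sends the genus-$1$ cobordism $\cobHoleVV$ to ``multiplication by $\eta=\mu(\Delta(1))=2x$ on the circle carrying the hole site, acting as the identity on all other components.'' That is only what happens when the two vertical strands entering $\cobHoleVV$ lie on the \emph{same} circle of $D_s$, in which case the handle is attached to a single circle and the operation is $\mu\circ\Delta = 2x\cdot(\blank)$. When the two strands lie on \emph{different} circles, $\cobHoleVV$ is a tube joining those two circles: the TQFT gives $\Delta\circ\mu\colon A\otimes A\to A\otimes A$, which genuinely mixes the two tensor factors and cannot be written as ``the identity elsewhere times scalar multiplication on one component.'' You can see this distinction play out explicitly in the paper's own calculation, where the first summand after $\overline{\mathrm{RII}}$ produces $(q{:}r){:}(r{:}q)$ and $(r{:}q){:}(q{:}r)$ (two circles, $\Delta\circ\mu$) while the second produces $r^{(1)}r^{(2)}$ (one circle, $\mu\circ\Delta$). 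Moreover, because the RII saddles can merge or split exactly the circles that carry the hole site, your step ``the hole sits on a circle that is not touched by the RII saddles'' is not guaranteed, and the commutativity of $\Phi$ with each saddle summand does not follow from ``$2x$ on one tensor factor commutes with everything else.''

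The conclusion you are after is still correct, but the reason is different: the handle and the RII saddles are attached to \emph{geometrically disjoint} parts of the cobordism, so they can be slid past each other as surfaces, and $Z_A$ being a (symmetric monoidal) functor forces the corresponding Frobenius-algebra operators to commute on each summand; the Frobenius compatibility relation $(\mathrm{id}\otimes\mu)(\Delta\otimes\mathrm{id})=\Delta\mu=(\mu\otimes\mathrm{id})(\mathrm{id}\otimes\Delta)$ is what actually makes the calculation close up. Your fallback route through Bar-Natan's cobordism category is a reasonable way to package this, but it does not sidestep the need to identify $\Phi$ correctly, and one must still account for the cubical signs in the RII chain map (which is the sole source of the ``up to sign'' in the statement). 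As written, the proof would break down the moment one tried to carry out the claimed commutativity at the level of enhanced states.
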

\begin{proof}   
In terms of enhanced states, the composition $\rii \circ \Phi \circ \overline{\rii}$ is given as follows:
\begin{align*}
\tikz[baseline=-.5ex]{%
  \node at (0,0) {\diagParallelVVV};
  \node[left] at (135:1) {$p$};
  \node[above] at (90:1) {$q$};
  \node[right] at (45:1) {$r$}; }
&\xmapsto{\overbar{\mathrm{RII}}}
\tikz[baseline=-.5ex]{%
  \coordinate (I1) at (135:.55);
  \coordinate (I2) at (225:.55);
  \node at (0,0) {\diagCrossVRivUL};
  \draw[blue,ultra thick] ($ (I1)+(-.05,.25) $) -- ($ (I1)+(.05,-.25) $);
  \draw[green,ultra thick] ($ (I2)+(.05,.25) $) -- ($ (I2)+(-.05,-.25) $);
  \node[left] at (135:1) {$p$};
  \node[above] at (90:1) {$q$};
  \node[right] at (45:1) {$r$}; }
+
\tikz[baseline=-.5ex]{%
  \coordinate (I1) at (135:.55);
  \coordinate (I2) at (225:.55);
  \node at (0,0) {\diagCrossVRivUL};
  \draw[green,ultra thick] ($ (I1)+(.25,.05) $) -- ($ (I1)+(-.25,-.05) $);
  \draw[blue,ultra thick] ($ (I2)+(.25,-.05) $) -- ($ (I2)+(-.25,.05) $);
  \node[above] at (120:1) {$p:q$};
  \node[below] at (240:1) {$q:p$};
  \node at (-.6,0) {$1$};
  \node[right] at (45:1) {$r$}; } \\
&\xmapsto{\Phi}
\tikz[baseline=-.5ex]{%
  \coordinate (I1) at (135:.55);
  \coordinate (I2) at (225:.55);
  \node at (0,0) {\diagCrossVRivUL};
  \draw[blue,ultra thick] ($ (I1)+(-.05,.25) $) -- ($ (I1)+(.05,-.25) $);
  \draw[green,ultra thick] ($ (I2)+(.05,.25) $) -- ($ (I2)+(-.05,-.25) $);
  \node[left] at (135:1) {$p$};
  \node[above] at (90:1) {$(q:r):(r:q)$};
  \node[right] at (45:1) {$(r:q):(q:r)$}; }
+
\tikz[baseline=-.5ex]{%
  \coordinate (I1) at (135:.55);
  \coordinate (I2) at (225:.55);
  \node at (0,0) {\diagCrossVRivUL};
  \draw[green,ultra thick] ($ (I1)+(.25,.05) $) -- ($ (I1)+(-.25,-.05) $);
  \draw[blue,ultra thick] ($ (I2)+(.25,-.05) $) -- ($ (I2)+(-.25,.05) $);
  \node[above] at (120:1) {$p:q$};
  \node[below] at (240:1) {$q:p$};
  \node at (-.8,0) {$r^{(1)}$};
  \node[right] at (45:1) {$r^{(2)}$}; } \\
&\xmapsto{\mathrm{RII}}
\tikz[baseline=-.5ex]{%
  \node at (0,0) {\diagParallelVVV};
  \node[left] at (135:1) {$p$};
  \node[above] at (90:1) {$(q:r):(r:q)$};
  \node[right] at (45:1) {$(r:q):(q:r)$}; }
+(-1)^{i+2}
\tikz[baseline=-.5ex]{%
  \node at (0,0) {\diagParallelVVV};
  \node[left] at (135:1) {$(p:q):(q:p)$};
  \node[above] at (90:1) {$(q:p):(p:q)$};
  \node[right] at (45:1) {$r$}; }
\mathrlap{\quad.}
\end{align*}
Rotating all the pictures upside-down, we also obtain the other composition.
Comparing the two, we obtain the required commutativity.
\end{proof}

\begin{proof}[Proof of \cref{prop:invO45}]
We first verify the part \ref{sub:invO45:O4}.
Note that, by virtue of the symmetry of Frobenius calculus on the second Reidemeister move, \eqref{eq:invO45:O4a} is proved in a similar manner to \eqref{eq:invO45:O4e}, so we only show the latter.
The genus-$1$ maps $\widehat\Phi$ in \eqref{eq:invO45:O4e} are induced by the sequences below respectively as asserted in \cref{cor:Phi-hat}:
\[
\begin{gathered}
C\left(\diagCrossHRivUL\right)
\xrightarrow{\delta_-} C\left(\diagCrossVRivUL\right)\left\{1\right\}
\xrightarrow{\Phi} C\left(\diagCrossVRivUL\right)\left\{3\right\}
\xrightarrow{\delta_+} C\left(\diagCrossHRivUL\right)\left\{4\right\}
\quad,\\[2ex]
C\left(\diagCrossHRivUR\right)
\xrightarrow{\delta_-} C\left(\diagCrossVRivUR\right)\left\{1\right\}
\xrightarrow{\Phi} C\left(\diagCrossVRivUR\right)\left\{3\right\}
\xrightarrow{\delta_+} C\left(\diagCrossHRivUR\right)\left\{4\right\}
\quad.
\end{gathered}
\]
On the other hand, \cref{lemma4e1} and \cref{lemma4e3} asserts that there is a zig-zag of quasi-isomorphisms connecting these sequences.
In view of \cref{prop:hinv-cofib}, it gives rise to a zig-zag of quasi-isomorphism as depicted in \eqref{eq:invO45:O4e}.

It remains to verify the part \ref{sub:invO45:O5}.
In view of the explicit formula of the chain map $\overbar{\mathrm{RII}}$, it is easy to verify that both of the two compositions are given, in terms of enhanced states, as follows:
\[
\tikz[baseline=-.5ex]{%
  \node at (0,0) {\diagSmoothRight};
  \node at (0,.7) {$p$};
  \node at (0,-.7) {$q$};
}
\;\mapsto\;
\tikz[baseline=-.5ex]{%
  \node at (0,0) {\diagRvFTw};
  \draw[green, ultra thick] (-.8,0) -- (-.2,0);
  \draw[green, ultra thick] (.2,0) -- (.8,0);
  \node at (0,1.2) {$(p:q):(q:p)$};
  \node at (0,-1.2) {$(q:p):(p:q)$};
}
\quad.
\]
\end{proof}

We are now ready to prove \cref{main:KHinv}.

\begin{proof}[Proof of \cref{main:KHinv}]
As mentioned in the beginning of the section, it suffices to show invariance under the moves \eqref{eq:moveSing}.
Let $D$ and $D'$ be two singular link diagrams with exactly $r$ double points such that they are related by one of the moves \eqref{eq:moveSing} or Reidemeister moves; hence we have a canonical identification $c^\#(D)=c^\#(D')$.
Put $\{C(D_A)\otimes\mathbb F_2\}_{A\subset c^\#(D)}$ and $\{C(D'_A)\otimes\mathbb F_2\}_{A\subset c^\#(D')}$ to be the $r$-cube constructed in \cref{sec:KHsing:KHsing}.
Since Reidemeister moves and the moves \eqref{eq:moveSing} are realized as chain maps induced by local operations of TQFT, using \cref{prop:invO45} and explicit descriptions in \cref{lemma4e1} and \cref{lemma4e3}, one sees that there is a zig-zag of quasi-isomorphisms of $r$-cubes depicted as below:
\[
\begin{tikzcd}
\{C(D_A)\otimes\mathbb F_2\}_{A\subset c^\#(D)} \ar[r,<->,dashed,"\sim"] & \{C(D'_A)\otimes\mathbb F_2\}_{A\subset c^\#(D')}
\end{tikzcd}
\quad.
\]
By \cref{prop:hinv-mcone}, this induces an isomorphism on the homologies of multiple mapping cones; namely,
\[
\mathit{Kh}^{\ast,\star}(D;\mathbb F_2)
\cong \mathit{Kh}^{\ast,\star}(D';\mathbb F_2)
\quad.
\]
This completes the proof.
\end{proof}

\section{Properties of extended Khovanov homology}
\label{sec:prop}

In this last section, we investigate properties of our extension of Khovanov homology.
In particular, the remaining results \cref{main:catVas} and \cref{main:KH-FIrel} are proved.

\subsection{Categorified Vassiliev skein relation}
\label{sec:cat-Vassiliev}

We first give a proof of \cref{main:catVas} that asserts Khovanov homology satisfies a categorified version of Vassiliev skein relation.
The proof relies on the general results on multiple mapping cones.

\begin{proof}[Proof of \cref{main:catVas}]
Let $D$ be a diagram of $L$, so we obtain an $r$-cube $\{C(D_A)\otimes\mathbb F_2\}_A$ of complexes in the way described above.
Let $\widetilde X=\{\widetilde X_{B,A}\}_{(B,A)}$ be an $r$-cube of cofiber sequences extending $\{C(D_A)\otimes\mathbb F_2\}_A$ obtained in the way described before \cref{lem:multcone-cube}.
For the double point $b\in c^\sharp(D)$, we write $D_+$ and $D_-$ the diagrams obtained from $D$ by resolving $b$ to positive and negative crossings respectively.
Then, unwinding the construction of $\widetilde X$ described in \cref{sec:KHsing:mult-cone}, one sees that we have a cofiber sequence
\[
C(D_-;\mathbb F_2)
\xrightarrow{\widehat\Phi} C(D_+;\mathbb F_2)
\to C(D;\mathbb F_2)
\ .
\]
Taking the homology groups, we obtain the required long exact sequence.
\end{proof}

As a consequence of \cref{main:catVas}, the unnormalized Jones polynomial is recovered from our extension of Khovanov homology.
We write $\widetilde V$ the unnormalized Jones polynomial and $\widetilde V^{(r)}$ its $r$-th derivative.

\begin{corollary}
\label{cor:Khsing-Jones}
For a singular link $L$ with exactly $r$ double points, we have
\[
\left[\mathit{Kh}(L;\mathbb F_2)\right]_q
= \widetilde V^{(r)}(L)
\quad.
\]
\end{corollary}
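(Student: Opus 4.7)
The plan is to proceed by induction on the number $r$ of double points, with the base case $r = 0$ supplied by the classical Khovanov identification of the graded Euler characteristic with the unnormalized Jones polynomial (\cref{theo:Khovanov}\,(2) applied to $k=\mathbb{F}_2$, combined with \cref{prop:KH-vs-KHsing} saying that our construction agrees with the usual one on non-singular diagrams).

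For the inductive step, I would fix a diagram $D$ of a singular link $L$ with exactly $r \geq 1$ double points, single out one double point $b$, and form $L_+$ and $L_-$ by resolving $b$ into a positive and negative crossing respectively. Both $L_\pm$ have exactly $r-1$ double points. \Cref{main:catVas} supplies a long exact sequence of $\mathbb{F}_2$-vector spaces at each fixed quantum degree $j$. Because the maps in this long exact sequence come from a cofiber sequence of chain complexes (as explicitly observed in the proof of \cref{main:catVas}) and the $j$-grading is preserved by all differentials and structure maps (see the bidegree computation in \cref{cor:Phi-hat}), the hypotheses of \cref{cor:cofibseq-Euler} apply slice by slice. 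Hence, for each $j$,
\[
\chi\bigl(\mathit{Kh}^{\ast,j}(L;\mathbb{F}_2)\bigr)
= \chi\bigl(\mathit{Kh}^{\ast,j}(L_+;\mathbb{F}_2)\bigr)
- \chi\bigl(\mathit{Kh}^{\ast,j}(L_-;\mathbb{F}_2)\bigr),
\]
provided the vector spaces in question are finite-dimensional. Multiplying by $q^j$ and summing gives
\[
\bigl[\mathit{Kh}(L;\mathbb{F}_2)\bigr]_q
= \bigl[\mathit{Kh}(L_+;\mathbb{F}_2)\bigr]_q
- \bigl[\mathit{Kh}(L_-;\mathbb{F}_2)\bigr]_q.
\]
Applying the inductive hypothesis to the right-hand side and invoking the Vassiliev skein relation \eqref{eq:V-skein} for $\widetilde V^{(r)} = (\widetilde V^{(r-1)})^{(1)}$ now yields $\widetilde V^{(r)}(L)$, as required.

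The main technical point to be checked is the finite-dimensionality needed to run \cref{cor:cofibseq-Euler}. This is not entirely automatic: although each $\overbar C^{i,j}(D_A)$ is finitely generated over $\mathbb{F}_2$ (being a sum over enhanced states, which are finite in number for a fixed bidegree), the multiple mapping cone $C(D;\mathbb{F}_2) = \operatorname{MCone}(\{C(D_A)\otimes\mathbb{F}_2\}_A)$ aggregates $2^r$ such pieces, but only finitely many at each bidegree $(i,j)$. Hence $\mathit{Kh}^{i,j}(L;\mathbb{F}_2)$ is finite-dimensional for every $(i,j)$, and it vanishes outside a bounded range in $i$ for each fixed $j$, so the Euler characteristic at each quantum degree is a well-defined integer and the passage from the long exact sequence to the polynomial identity above is valid. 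Aside from this bookkeeping, the proof is essentially a one-line consequence of \cref{main:catVas} together with the classical case. Indeed, no further crossing-change analysis is required, because the whole categorified skein framework has already been installed in \cref{main:catVas}; the present corollary is its decategorification.
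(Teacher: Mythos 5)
Your proof is correct and follows essentially the same route as the paper's: induction on the number of double points, with the base case supplied by Khovanov's theorem and the inductive step obtained by taking Euler characteristics in the long exact sequence from \cref{main:catVas} and matching against the Vassiliev skein relation \eqref{eq:V-skein}. The only difference is that you explicitly spell out the finite-dimensionality and boundedness bookkeeping needed to apply \cref{cor:cofibseq-Euler}, a point the paper leaves implicit; this is a welcome but not essential addition.
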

\begin{proof}
We prove the statement by induction on the number $r$ of double points.
If $r=0$, i.e. $L$ is an ordinary link, then the statement is exactly \cref{theo:Khovanov}.
Assume $r>0$ and the statement is true for links with less than $r$ double points and suppose $L$ is a link with exactly $r$ double points.
Take a double point $b$ of $L$, and write $L_+$ and $L_-$ (possibly singular) links obtained from $L$ by resolving $b$ into positive and negative crossings respectively.
Then \cref{main:catVas} implies that there is a long exact sequence of the form
\[
\cdots
\to\mathit{Kh}^{i-1,\star}(L;\mathbb F_2)
\to \mathit{Kh}^{i,\star}(L_-;\mathbb F_2)
\xrightarrow{\widehat\Phi} \mathit{Kh}^{i,j}(L_+;\mathbb F_2)
\to\mathit{Kh}^{i,\star}(L;\mathbb F_2)
\to\cdots
\quad.
\]
Taking the Euler characteristics, we obtain the equations
\begin{equation}
\label{eq:prf:Khsing-Jones:Vskein}
\left[\mathit{Kh}^{i,j}(L;\mathbb F_2)\right]_q
=\left[\mathit{Kh}^{i,j}(L_+;\mathbb F_2)\right]_q
-\left[\mathit{Kh}^{i,j}(L_-;\mathbb F_2)\right]_q
\quad.
\end{equation}
By induction hypothesis, \eqref{eq:prf:Khsing-Jones:Vskein} is exactly Vassiliev skein relation for the link $L$ with respect to the double point $b$.
Thus, this completes the induction and hence the statement for all links.
\end{proof}

\subsection{FI relation}
\label{sec:KHsing:FI}

We next show that Khovanov homology satisfies the \emph{FI relation}.
For a link invariant $v$ extended to singular links, Vassiliev skein relation \eqref{eq:V-skein} yields the equation
\[
v\left(\diagFiSing\right)
= v\left(\diagFiPos\right)
- v\left(\diagFiNeg\right)
= 0
\quad.
\]
We prove \cref{main:KH-FIrel} by categorifying this equation.

\begin{proof}[Proof of \cref{main:KH-FIrel}]
Let $D$ be a diagram of $L$, and suppose $b\in c^\sharp(D)$ is a double point of the form \eqref{eq:KH-FIrel:FIsing}.
By the construction of the complex $C(D;\mathbb F_2)$, and by virtue of \cref{cor:multcube-vanish}, it suffices to verify the statement only in the case $c^\sharp(D)=\{b\}$.

Put $D_+$ and $D_-$ the diagram obtained from $D$ by resolving $b$ to positive and negative crossings respectively.
We denote by $\widehat\Phi_b:C(D_-;\mathbb F_2)\to C(D_+;\mathbb F_2)$ the genus-$1$ map on $b$ and consider the following diagram:
\begin{equation}
\label{eq:prf:KH-FIrel:tri}
\begin{tikzcd}[column sep=0pt]
& C\left(\diagRiNil\;;\mathbb F_2\right) \ar[dl,"\mathrm{RI}^-"'] \ar[dr,"\mathrm{RI}^+"] & \\
C\left(\diagFiNeg\;;\mathbb F_2\right) \ar[rr,"\widehat\Phi_b"] && C\left(\diagFiPos\;;\mathbb F_2\right)
\end{tikzcd}
\quad,
\end{equation}
where $\mathrm{RI}^+$ and $\mathrm{RI}^-$ are the maps corresponding to the left-hand and right-hand Reidemeister moves of type I respectively.
According to \cite{Viro2004}, they are explicitly described in terms of enhanced states (see \cref{sec:quick-review:enhst}) as follows:
\[
\begin{gathered}
\mathrm{RI}^-\left(
\tikz[baseline=-.5ex]{%
  \nodeTikzBox{diagRiNil};
  \node at (-.4,0) {$p$}; }
\right)
\;=\;
\tikz[baseline=-.5ex]{%
  \nodeTikzBox{diagRiNeg};
  \draw[blue,ultra thick] (0,.4) -- (0,-.4);
  \node at (-.6,0) {$p$};
  \node at (.4,0) {$1$}; }
\quad,
\\
\mathrm{RI}^+\left(
\tikz[baseline=-.5ex]{%
  \nodeTikzBox{diagRiNil};
  \node at (-.4,0) {$p$}; }
\right)
\;=\;
\tikz[baseline=-.5ex]{%
  \nodeTikzBox{diagRiPos};
  \draw[green,ultra thick] (0,.4) -- (0,-.4);
  \node at (-.6,0) {$p$};
  \node at (.4,0) {$x$}; }
+
\tikz[baseline=-.5ex]{%
  \nodeTikzBox{diagRiPos};
  \draw[green,ultra thick] (0,.4) -- (0,-.4);
  \node at (-.6,0) {$px$};
  \node at (.4,0) {$1$}; }
\quad.
\end{gathered}
\]
It then turns out that the diagram \eqref{eq:prf:KH-FIrel:tri} is commutative.
In particular, since both $\mathrm{RI}^-$ and $\mathrm{RI}^+$ are quasi-isomorphisms, $\widehat\Phi_b$ is also a quasi-isomorphism.
On the other hand, we have a cofiber sequence
\[
C(D_-;\mathbb F_2)
\xrightarrow{\widehat\Phi_b} C(D_+;\mathbb F_2)
\to C(D;\mathbb F_2)
\ .
\]
It follows that $C(D;\mathbb F_2)$ is null-homotopic, and we obtain the result.
\end{proof}

\section*{Acknowledgements}
The authors would like to thank Professor Toshitake Kohno for his comments.

\end{document}